\documentclass[11pt,a4paper]{article}

\usepackage{graphicx} 
\usepackage{a4wide} 
\usepackage[utf8]{inputenc}
\usepackage{amsfonts}
\usepackage{subcaption}
\usepackage{enumitem}
\usepackage{mathtools}
\usepackage[nottoc]{tocbibind}
\usepackage{bbm}
\usepackage{amsthm}
\usepackage{xfrac}
\usepackage{aligned-overset}
\usepackage[title]{appendix}
\usepackage{xcolor}
\usepackage{textcomp}
\usepackage{manyfoot}
\usepackage{booktabs}
\usepackage{listings}

\title{Stochastic nonlocal traffic flow models with Markovian noise}

\author{Timo Böhme\footnotemark[1],
\; Simone Göttlich\footnotemark[1],
\; Andreas Neuenkirch\footnotemark[1]}

\date{\today}

\begin{document}
\theoremstyle{plain}
\newtheorem{theorem}{Theorem}[section] 
\newtheorem{lemma}[theorem]{Lemma}
\newtheorem{proposition}[theorem]{Proposition}
\newtheorem{corollary}[theorem]{Corollary}

\theoremstyle{definition}
\newtheorem{definition}[theorem]{Definition}
\newtheorem{example}[theorem]{Example}

\theoremstyle{remark}
\newtheorem{remark}[theorem]{Remark}
\newtheorem{bemerkung}[theorem]{Bemerkung}
\newtheorem{assumption}[theorem]{Assumption}

\renewcommand{\thefootnote}{\fnsymbol{footnote}} 

\maketitle

\footnotetext[1]{University of Mannheim, Department of Mathematics, B6, 68159 Mannheim, Germany (timo.boehme@students.uni-mannheim.de, \{goettlich, neuenkirch\}@uni-mannheim.de).}

\begin{abstract}
\noindent
We extend the stochastic nonlocal traffic flow model from \cite{Boehme_2025} to more general random perturbations, including Markovian noise derived from a discretized Jacobi-type stochastic differential equation (SDE). 
Invoking a  deterministic stability estimate, we show that the arising random weak entropy solutions are measurable, ensuring that quantities such as the expectation are well-defined. 
We show that the proposed Jacobi-type noise is of particular interest as it ensures interpretability, preserves boundedness, and significantly alters the stochastic realizations compared to the previous white noise approach. 
Moreover, we introduce a local solution operator which provides information on the local effect of the noise and utilize it to derive a mean-value hyperbolic nonlocal PDE, which serves as a proxy for the mean value of the exact solution. 
The quality of this proxy and the impact of the noise process are analyzed in several simulation studies.
\end{abstract}

{\bf AMS Classification.} 35L65, 35R60, 60H30, 90B20 

{\bf Keywords.} Nonlocal scalar conservation laws, traffic flow, stochastic velocities, Jacobi-process, measurability, numerical simulations

\section*{Introduction} 
In the search for mathematical models describing traffic flow, hyperbolic conservation laws have proven particularly effective over the past decades. Describing traffic as a macroscopic quantity, they are computationally efficient and allow for the simulation of large-scale networks. For a comprehensive overview, we refer to~\cite{Tosin2021_conference, Rosini2013} and to~\cite{garavellohanpiccoli2016book, GaravelloPiccoliBook} for the mentioned network extensions.
Although the aforementioned classical models assume that driving behavior depends on local quantities around each driver, nonlocal traffic flow models have become an active area of research.
As exemplarily (further) developed in \cite{BlandinGoatin2016, Chiarello2018, GoatinPuppo2024, ColomboGaravelloMercier2012, colombo2020local, Friedrich2018, Huang2022, keimer2018nonlocal}, 
these models extend local approaches by integrating traffic conditions (far-) ahead into the behavior.
In particular in the context of connected autonomous vehicles, this allows for the inclusion of not only local data but also remote information covering the entire road downstream.
From a modeling perspective, the use of such data enables the description of anticipatory drivers and smart vehicles that adjust their speed early in response to distant congestion, thus saving time and resources while stabilizing the traffic.\\
It is well known that driving behavior and traffic dynamics are subject to stochastic influences, which can be interpreted in two primary ways.
First, stochasticity may stem directly from agents, representing intrinsic randomness. For instance, human drivers do not have fixed reactions to specific traffic conditions, but rather follow a probabilistic response distribution within certain limits. Similarly, autonomous vehicles, which can react deterministically, are subject to measurement noise.
Second, from a modeling perspective, stochasticity can be understood as a placeholder for incomplete information regarding unknown variables.
In this context, even if the underlying traffic dynamics are deterministic, the modeling simplifications necessitate aggregating unobserved effects into a stochastic term.
Although some stochastic extensions exist for local macroscopic models, see e.g.,~\cite{Jabari2012,Li2012, Wen2025}, 
stochastic extensions to nonlocal models remain sparse.
To address this gap, in \cite{Boehme_2025}, we proposed a traffic flow model incorporating stochastic nonlocal velocities. The so-called sNV model belongs to the broader class of stochastic conservation laws with random fluxes, see e.g., \cite{Garnier2012,
Lions2013, 
Mishra2012,Risebro2015}.
Its base is a nonlocal model presented in~\cite{Friedrich2018} that uses downstream velocities but is evaluated using a noisy response function.
For this framework, in \cite{Boehme_2025}, we established theoretical results regarding the pathwise existence and uniqueness of weak entropy solutions and provided a time-dependent white-noise process satisfying the necessary regularity requirements.\\
The overall aim of this paper is to extend the initial framework, providing additional theoretical foundations and significantly expanding the numerical possibilities.
We proceed in three directions, which collectively yield a generalized and more capable sNV model.
First, we address open theoretical gaps from a stochastic perspective, including the stability of solutions and critically the existence of the expectation.
Second, we expand the model to incorporate autocorrelated Markovian noise, introducing transformed SDE-type noise of Jacobi nature.
This not only allows for a direct physical interpretation but leads to significantly stronger perturbations, as we demonstrate. 
Third, motivated by these strong perturbations, we derive a mean-value hyperbolic PDE based on a local solution operator to describe expected traffic conditions.
We support this with numerical Monte Carlo results that demonstrate convergence in the characteristic space. Thereby we simultaneously contribute to the fields of both stochastic conservation laws and nonlocal traffic flow modeling.\\
The structure is as follows: 
Section~\ref{sec:sNV_class} is dedicated to extending the theoretical foundations and capabilities of the sNV framework.
We begin by revisiting the deterministic nonlocal model from \cite{Friedrich2018}, alongside the stochastic extension and key results established in \cite{Boehme_2025}, in Sections~\ref{subsec:NV} through~\ref{subsec:ex_uniq}.
Building on this setup, we use a stability result regarding the noise-sensitivity of solutions, which enables us to present our first key contribution: the measurability of solutions, given in Section~\ref{subsec:well_posed_exp}.
We further develop the mathematical framework in Section~\ref{subsec:loc_sol_op} by deriving a local solution operator.
This operator is then utilized in Section~\ref{subsec:mean_velo} to establish our second core contribution: a mean-value PDE that captures the evolution of the expected value.
Next, in Section~\ref{subsec:markov_type_error}, we expand the initial white-noise approach to include autocorrelated noise and present a suitable SDE-type noise formulation along with the necessary transformations to ensure well-posedness and numerical implementability, constituting our third main contribution.
Section~\ref{sec:numerics} describes the numerical schemes used for the stochastic nonlocal models, along with the noise sampling established previously.
In addition, in Section~\ref{sec:num_results}, we employ these schemes to present numerical results for the SDE-type noise formulation and conduct Monte Carlo experiments related to the mean-value PDE.
While the strong perturbations introduced by the SDE-type noise motivate the need for a suitable proxy, the numerical analysis presented provides empirical evidence for the convergence of the mean-value approximation. 
The latter is further discussed in light of the beneficial influence of the nonlocal range, validating the proposed framework as a whole.
Finally, Chapter~\ref{sec:conc_outlook} summarizes our findings and offers a concluding discussion.

\section{The class of sNV models and their properties}\label{sec:sNV_class}

\subsection{A deterministic nonlocal velocity model (NV)}\label{subsec:NV}
The deterministic nonlocal velocity model, originally introduced in~\cite{Friedrich2018}, is given by the scalar conservation law
\begin{align}\label{eq:NV}\tag{NV}
	\partial_t\rho+\partial_x(\rho(W_\eta * v(\rho)))=0, \quad x \in \mathbb{R}, \; t>0,
\end{align}
where the convolution is 
\begin{align*}
	(W_\eta * v(\rho))(t,x):=\int_{x}^{x+\eta}  W_{\eta}(y-x) v(\rho(t,y)) \,dy, \quad \eta>0,
\end{align*}
and the Cauchy problem is equipped with initial conditions of the form
\begin{align}\label{eq:CP_BV}\tag{I}
	\rho(0,x)=\rho_0(x) \in (L^1 \cap \text{BV})(\mathbb{R};[0,\rho^{max}])
\end{align}
for $\rho^{max}>0$ given. The assumptions on the kernel function are as follows:
\begin{itemize}
\item[(A)] Given the look-ahead distance $\eta>0$, we assume that
$$ W_\eta \in C^1([0,\eta];\mathbb{R}^+) \;\;  \text{with} \;\;  W_\eta' \leq 0,\quad \int_{0}^{\eta} W_\eta(x) \,dx=W_0, \quad \lim_{\eta \to \infty} W_\eta(0)=0.$$
\end{itemize}
Regarding the velocity function, the following assumptions need to be made:
\begin{itemize}
\item[(B)]  Given $\rho^{max}>0$, we assume for the velocity $v(\rho)$ that
$$v \in C^2([0,\rho^{max}];\mathbb{R}^+) \;\; \text{with} \;\; v'\leq 0, \quad v(0)=v^{max}>0.$$
\end{itemize}
The solution concept for \eqref{eq:NV} are weak entropy solutions as of Kru\v{z}kov \cite{Kruzkov1970}. %
\begin{definition}[Nonlocal weak entropy solution]\label{def:nonlocal_weak_entropy_sol}\text{ }\\
	A function $\rho \in C([0,T];L^1(\mathbb{R}))$ with $\rho(t,\cdot) \in \text{BV}(\mathbb{R};\mathbb{R})$ is a weak entropy 
	solution to (\ref{eq:NV}) with (\ref{eq:CP_BV}), i.e., to the Cauchy problem, if
	\begin{align*}
		\int_{0}^{T} & \int_{-\infty}^{\infty} |\rho-c|  \partial_t \phi +\text{sign}(\rho-c)(f(t,x,\rho)-f(t,x,c)) \partial_x \phi -\text{sign}(\rho-c) \partial_x f(t,x,c) \phi \,dx \,dt \\
		+& \int_{-\infty}^{\infty} |\rho_0(x)-c|\phi(0,x) \,dx \geq 0
	\end{align*}
	holds for all non-negative test-functions $\phi \in C_0^1([0,T] \times \mathbb{R} ; \mathbb{R}^+)$ and any constant $c\in \mathbb{R}$.
	This reduces to \cite[Def. 1]{BlandinGoatin2016} for bounded $\rho$ and special choices of $\phi$ and $c$ \cite[2.12]{Friedrich2021_diss}.
\end{definition} %
\noindent
Under assumptions (A) and (B) it has been shown in \cite{Friedrich2021_diss} that equation \eqref{eq:NV} with initial condition \eqref{eq:CP_BV} admits a unique entropy solution. This result has been extended in \cite{Boehme_2025} to a randomly  perturbed velocity function.

\subsection{A stochastic nonlocal velocity model (sNV)}\label{subse:sNV}
As a first step towards combining a nonlocal velocity model with stochastic perturbations, we introduced a stochastic NV model in \cite{Boehme_2025}, given by the conservation law:
\begin{align}\label{eq:sNV}\tag{sNV}
    \partial_t\rho+\partial_x \Bigl(\rho\bigl(W_\eta*v_\epsilon(\rho,t)\bigr)\Bigr)=0.
\end{align}
Here, the convolution is  as above and stochasticity is introduced through a time-dependent velocity function
\begin{align*}
   v_\epsilon(\rho,t) = \max\{0, v(\rho) + \epsilon(t)\},
\end{align*}
where $\epsilon(t)$ denotes a time-dependent error term, which is given by
\begin{align*}
    \epsilon(t)(\omega)=\sum_{k=1}^{R_T} \epsilon^k(\omega) \chi_{[t^k,t^{k+1})}(t), \qquad t\in[0,T], \, \, \omega \in \Omega,
\end{align*}
where $t^{k}=k \delta_R$, $R_T=\lfloor T/\delta_R \rfloor$
and the random variables $\epsilon^k$, $k=1, \ldots, R_T$, are defined in a probability space $(\Omega, \mathcal{F}, \mathbb{P})$, and are independent as well as uniformly distributed on $[-\tau,\tau]$ for some $\tau \in (0, v^{max}]$. 
One of the main results of \cite{Boehme_2025} is that
for such a random perturbation the (sNV) model admits for every $\omega \in \Omega$ a unique entropy solution.
However, a revision of the proof shows that the independence assumption and the distribution assumption were  motivated by modeling reasons and were not required for this result. Thus, Theorem 5.8 of \cite{Boehme_2025} extends directly to random perturbations of the following type:
\begin{itemize}
    \item[(C)] Let $(\Omega, \mathcal{F}, \mathbb{P})$ be a complete probability space. The time-dependent error is given \begin{align}\label{eq:err_very_general}
    \epsilon(t)(\omega)=\sum_{k=1}^{R_T} \epsilon^k(\omega) \chi_{[t^k,t^{k+1})}(t), \qquad t\in[0,T], \, \, \omega \in \Omega, 
\end{align} where  $\epsilon^k$, $k=1, \ldots, R_T$,  are random variables on $(\Omega, \mathcal{F}, \mathbb{P})$, which are uniformly bounded by some $\tau \in (0,  v^{max}]$, that is
$$ \sup_{k=1, \ldots, R_T} |  \epsilon^k(\omega) | \leq \tau , \qquad \omega \in \Omega.$$
\end{itemize}
Under Assumption (C) we have $0 \leq v_\epsilon(\rho,t)  \leq 2  v^{max}$ and thus a deterministic maximal velocity. 
\begin{remark}
The assumption $\tau \leq v^{max}$ is a modeling assumption, which ensures that the noise does not overwhelm the system. Most of the  mathematical analysis of the SNV model given below is valid also under the global assumption $\tau <\infty$.
\end{remark}

\begin{remark}\label{rem:on_sND}
 While nonlocal models are known to provide more realistic driver behavior, the  velocity-based formulation positions the model within the framework of scalar conservation laws with stochastic fluxes.
 This enables the approximation scheme we develop in Section \ref{subsec:mean_velo} to leverage the existence of an expected flux function.
 Consider instead a stochastic nonlocal density model, 
 where we similarly perturb the nonlocal quantity, i.e., the density of the Blandin and Goatin model~\cite{BlandinGoatin2016}:
    \begin{align}\label{eq:sND} \tag{sND}
        \partial_t \rho + \partial_x\Bigl( \rho  v\bigl( W_\eta*
        \min \{ \max \{ \rho+\epsilon,0 \} ,\rho^{\max} \} \bigr)\Bigr)=0.
    \end{align}
Here, evaluating the expected flux depends entirely on the input to the deterministic flux function, which requires knowledge of $\mathbb{E}[\rho]$ throughout the nonlocal horizon.
As we describe in Section \ref{subsec:mean_velo}, this information is not directly available.
\end{remark}

\subsection{Existence, uniqueness and stability of the sNV model} \label{subsec:ex_uniq} 

\begin{theorem}[Existence, uniqueness and properties of (sNV)]\label{thm:ex_uniq_sNV_extended}\text{ }\\
   Let $\rho_0$ as in (\ref{eq:CP_BV}) and assume that assumptions (A), (B) and (C) hold.
    Then, for 
     any $T>0$ and any fixed $\omega \in \Omega$ a weak entropy solution $\rho(t,x)(\omega)$, in the sense of Def. \ref{def:nonlocal_weak_entropy_sol}, to 
     the Cauchy Problem of (\ref{eq:sNV}), i.e.,
     \begin{align*}
        \begin{cases}
			\partial_t \rho(t,x) + \partial_x f_{\epsilon}(t,x,\rho(t,x))=0, & (t,x) \in (0,T] \times \mathbb{R},  \\ 
			\rho(0,x) =\rho_{0}(x), & x \in \mathbb{R},
		\end{cases} 	
     \end{align*}
     with $f_{\epsilon}(t,x,\rho)=\rho \bigl(W_\eta * v_\epsilon(\rho,\cdot)\bigr)(t,x)$, exists and is unique.
    Further, it holds for all $\omega \in \Omega$:
    \begin{enumerate}
        \item[(1)] Maximum principle: for any $t \in [0,T]$ we have
        \begin{align*} 0 \leq \inf_{x \in \mathbb{R}} \rho_0(x) \leq \rho(t,x)(\omega) \leq \sup_{x \in \mathbb{R}} \rho_0(x) \leq \rho^{max}. \end{align*}
        \item[(2)] $L^1$-conservation: for any $t \in [0,T]$ we have
        \begin{align*} {\lVert (\rho(t,\cdot)) (\omega)\rVert}_{L^1(\mathbb{R})}={\lVert \rho_0 \rVert}_{L^1(\mathbb{R})}\quad \forall\, t \in [0,T]. \end{align*}
        \item[(3)] TV bounds: for any $T>0$ we have
        \begin{align*}   
            \text{TV}(\rho(T,\cdot)(\omega);\mathbb{R}) &\leq \exp \Bigl(T{C}_1 \Bigr) \text{TV}(\rho_0;\mathbb{R}),\\
             \text{TV}(\rho(\cdot,\cdot)(\omega);\mathbb{R} \times [0,T]) &\leq T \exp \bigl(T C_1\bigr)  C_2
                \text{TV}(\rho_0;\mathbb{R}),
        \end{align*}
        where $C_1=C_1(W_\eta,v,\rho^{max},\tau)>0$ and  $C_2=C_2(W_\eta,v,\rho^{max},\tau)>0$ are constants, which only depend on $W_{\eta}$, $v$, $\rho^{max}$ and $\tau$.
    \end{enumerate}
\end{theorem}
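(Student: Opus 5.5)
The plan is to argue pathwise: fix $\omega\in\Omega$. Then $\epsilon(\cdot)(\omega)$ is a deterministic, piecewise constant function of time. It has finitely many jumps at the grid points $t^k=k\delta_R$ and is bounded by $\tau$ by Assumption (C). Consequently the statement reduces to a deterministic well-posedness result for a nonlocal conservation law whose velocity $v_\epsilon(\rho,t)=\max\{0,v(\rho)+\epsilon(t)\}$ is constant in time on each interval $[t^k,t^{k+1})$. On each such interval the velocity is a fixed function of $\rho$. It is Lipschitz but only piecewise $C^2$ because of the $\max$, is nonincreasing in $\rho$, and satisfies $0\le v_\epsilon\le 2v^{max}$.

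\textbf{Step 1 (one interval).} On $[t^k,t^{k+1}]$ I invoke the well-posedness theory for \eqref{eq:NV} from \cite{Friedrich2021_diss}, in the form extended in \cite{Boehme_2025}. That theory requires a monotone, Lipschitz velocity and uses only the Lipschitz constant of $v$ and bounds on $v$, not $C^2$ regularity per se. The main technical check is therefore that the truncation $\max\{0,\cdot\}$ does not break the compactness argument. I would handle this either by mollifying $v_\epsilon$ and passing to the limit with uniform estimates, or by noting that the Godunov-type scheme in \cite{Friedrich2018} only needs monotonicity and Lipschitz continuity. With this, existence follows from a numerical scheme: the CFL condition uses the bound $2v^{max}$, and one derives uniform $L^\infty$, TV and $L^1$-Lipschitz-in-time estimates and applies Helly's theorem. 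Uniqueness follows from a Kružkov doubling-of-variables argument adapted to the nonlocal flux, combined with a Gronwall estimate. The resulting constants depend only on $W_\eta$ (through $W_\eta(0)$ and $W_0$), on $\mathrm{Lip}(v)$, $\rho^{max}$, and on $\tau$ via $\|v_\epsilon\|_\infty\le v^{max}+\tau$.

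\textbf{Step 2 (concatenation).} I start with $\rho_0$ and solve successively, using $\rho(t^k,\cdot)$ as the initial datum for the next interval. This is admissible because properties (1) and (2) guarantee that $\rho(t^k,\cdot)$ again lies in $(L^1\cap\text{BV})(\mathbb{R};[0,\rho^{max}])$. The glued function lies in $C([0,T];L^1)$. It satisfies the entropy inequality of Def.~\ref{def:nonlocal_weak_entropy_sol} because the inequalities on consecutive strips can be added: choosing test functions localized in time, the boundary terms at each $t^k$ cancel by continuity in $L^1$. Uniqueness transfers interval by interval, since any global solution restricts to an entropy solution on each strip.

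\textbf{Step 3 (properties).} The maximum principle (1) and $L^1$-conservation (2) hold on each interval and chain trivially; for (2) one uses nonnegativity together with the conservative form. For the TV bound (3), each interval gives $\text{TV}(\rho(t))\le e^{(t-t^k)C_1}\text{TV}(\rho(t^k))$ with the same $C_1$, because $C_1$ depends only on the uniform quantities listed above and not on $k$ or $\omega$. Multiplying these bounds telescopes to $e^{TC_1}\text{TV}(\rho_0)$. The space-time TV bound then follows by combining the spatial bound with the $L^1$-Lipschitz continuity in time, $\|\rho(t)-\rho(s)\|_{L^1}\le C_2 e^{TC_1}\text{TV}(\rho_0)|t-s|$, and integrating over $[0,T]$.

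\textbf{Main obstacle.} The main obstacle is verifying that the uniform constants in Step 1 are genuinely independent of the particular shift $\epsilon^k$. This includes checking that the nonsmooth point of the truncation does not enter the estimates in \cite{Friedrich2021_diss}. I would address this by tracking that only $\mathrm{Lip}(v_\epsilon)\le\mathrm{Lip}(v)$ and $\|v_\epsilon\|_\infty\le 2v^{max}$ appear in the constants.
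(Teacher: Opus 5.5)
Your proposal is correct, but it is organized differently from the paper. The paper does not decompose in time. It fixes $\omega$ and observes that the proof of Theorem~5.8 in \cite{Boehme_2025} never used the independence or the uniform law of the $\epsilon^k$, only the bound $|\epsilon^k|\le\tau$. That proof runs a Godunov scheme on all of $[0,T]$ with the time-dependent velocity $v_\epsilon^n$, derives uniform $L^\infty$, TV and $L^1$-in-time bounds, extracts a limit by Helly's theorem, and gets uniqueness by a Kru\v{z}kov-type argument. It therefore applies verbatim to every path under Assumption~(C).

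You instead split $[0,T]$ along the noise grid, solve an autonomous NV problem with the truncated velocity $\max\{0,v+\epsilon^k\}$ on each strip, and glue. This is exactly the representation $\rho(t_k+\delta t,\cdot)=\mathcal{S}^{\gamma_k}_{\delta t}[\rho(t_k,\cdot)]$ that the paper introduces only afterwards, in Section~\ref{subsec:loc_sol_op}.

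Your route has two advantages. Inside each strip the nonlocal flux has no jumps in time, so the doubling-of-variables argument and the $L^1$-Lipschitz-in-time estimates are used in their standard form. You also never have to reconcile the noise grid $t^k$ with the numerical time grid.

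It also has costs. You must do the gluing work: the terminal trace terms at each $t^k$ via $L^1$-continuity, and the restriction of a global entropy solution to strips for uniqueness. More importantly, it does not remove the real technical point, namely well-posedness for the merely Lipschitz truncated velocity. That is exactly what \cite{Boehme_2025} supplies. Once you cite that result, applying it directly to the time-dependent path, as the paper does, is shorter. It also yields the global Godunov approximation with measurable $\rho_j^n$, which the paper uses later.
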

\noindent
As noted above, this result can be derived along the same lines as Theorem 5.8 in \cite{Boehme_2025}. This proof is carried out using a Godunov-type approximation $(\rho_j^n)_{n=0, \ldots, N_T, j \in \mathbb{Z}}$. While the 
quantities $\rho_j^n$ are $(\Omega, \mathcal{F})$-$(\mathbb{R},\mathcal{B}(\mathbb{R}))$ measurable, i.e., they are well-defined random variables, the weak entropy solution $\rho$ is obtained using a sub-sequence argument and Helly's theorem for fixed $\omega \in \Omega$. Thus, from this approach, it remains unclear whether quantities as
$$ \int_{\mathbb{R}} \mathbb{E}[ \rho(t,x)^p] \phi(x) dx, \qquad t \in [0,T],$$
with $p \geq 0$ and $\phi \in L^{\infty}(\mathbb{R};\mathbb{R}^+)$
or $  \mathbb{E}[ \rho(t,x)^p], \; t \in [0,T], \, x \in \mathbb{R}$, 
with $p>0$ are well defined. 
The key to solving these technical issues is the following deterministic stability result assuming given noise realizations.

\begin{lemma}[Stability estimate]\label{lem:stab_result}\text{ }\\
Let $\gamma_1,\gamma_2:[0,T] \rightarrow \infty$ be given by
 $$ \gamma_i(t)=\sum_{k=1}^{R_T} \gamma_i^k \chi_{[t^k,t^{k+1})}(t), \qquad t\in[0,T],$$
 where $\gamma_i^{k} \in [-\tau,\tau]$, $k=1, \ldots R_T$, $i=1,2$, with $\tau \in (0,v^{max}]$. Let $\rho_1^0,\rho_2^0$ be as in (\ref{eq:CP_BV}) and assume that assumptions (A) and (B) hold.
Finally, let $\rho_1$ and $\rho_2$ be
the respective weak solutions to \eqref{eq:sNV} in the sense of Def. \ref{def:nonlocal_weak_entropy_sol}, i.e., they are the solutions to 
\begin{align*}
        &\partial_t \rho_1(t,x) + \partial_x \bigl(\rho_1(t,x) V_{{\gamma_1}}(t,x)\bigr)=0,  && V_{{\gamma_1}}=W_\eta*v_{{\gamma_1}}(\rho_1,t),  &\rho_1(0,x)=\rho_1^0(x),  \\
        &\partial_t \rho_2(t,x) + \partial_x \bigl(\rho_2(t,x) V_{{\gamma_2}}(t,x)\bigr)=0,  && V_{{\gamma_2}}=W_\eta*v_{{\gamma_2}}(\rho_2,t), &\rho_2(0,x)=\rho_2^0(x).  
    \end{align*}
Then, we have
\begin{align*}
    {\left\lVert \rho_1(t,\cdot)-\rho_2(t,\cdot) \right\rVert}_{L^1(\mathbb{R})} \leq  \exp(KT \|v'\|_{\infty}) \left( {\left\lVert \rho_1^0-\rho_2^0 \right\rVert}_{L^1(\mathbb{R})} +K \int_0^T \left\lvert \gamma_1(t) - \gamma_2(t) \right\rvert \,dt \right)
\end{align*}
with 
$$ K=
W_{\eta}(0) \left( 2 \| \rho_1^0 \|_{L^1(\mathbb{R})} +  \exp(TC_1)TV(\rho_0;\mathbb{R}) \right)+  \|W'_{\eta} \|_{\infty} \| \rho_1^0 \|_{L^1(\mathbb{R})} . $$
\end{lemma}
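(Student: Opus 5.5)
The plan is the standard Kružkov-type stability argument for balance laws with different fluxes (in the spirit of Karlsen–Risebro / Lécureux-Mercier). We treat $\rho_1,\rho_2$ as solutions of \emph{local} conservation laws with given space-time dependent velocities $V_{\gamma_1},V_{\gamma_2}$, and then close a Gronwall loop through the nonlocal dependence of $V_{\gamma_i}$ on $\rho_i$.

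\textbf{Step 1: stability for linear transport with different velocities.} Fix the velocities $V_i(t,x):=V_{\gamma_i}(t,x)$. Each $\rho_i$ is the entropy solution of the linear equation $\partial_t\rho+\partial_x(\rho V_i)=0$. Here $V_i$ is Lipschitz in $x$ and piecewise constant in $t$, and for fixed $t$
\[
\partial_x V_i(t,x)=-W_\eta(0)v_{\gamma_i}(\rho_i(t,x),t)-\int_x^{x+\eta}W'_\eta(y-x)v_{\gamma_i}(\rho_i(t,y),t)\,dy .
\]
Using doubling of variables with the entropy inequalities of Def.~\ref{def:nonlocal_weak_entropy_sol} (or the known $L^1$-stability estimate for $\partial_t\rho+\partial_x(\rho V)=0$), we expect
\[
\tfrac{d}{dt}\|\rho_1-\rho_2\|_{L^1}\le \int_{\mathbb{R}}\bigl|\partial_x\bigl(\rho_2(V_1-V_2)\bigr)\bigr|\,dx
\le \mathrm{TV}(\rho_2(t))\|V_1-V_2\|_\infty+\|\rho_2(t)\|_{L^1}\|\partial_x(V_1-V_2)\|_\infty .
\]
The roles of the indices may be swapped so that the factors $\|\rho_1^0\|_{L^1}$ appear. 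We would derive this on the Godunov approximations of Theorem~\ref{thm:ex_uniq_sNV_extended} and pass to the limit, or directly by Kružkov with $c$ replaced by the other solution.

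\textbf{Step 2: bound the velocity differences.} Since $s\mapsto\max\{0,s\}$ is $1$-Lipschitz, we have $|v_{\gamma_1}(\rho_1,t)-v_{\gamma_2}(\rho_2,t)|\le\|v'\|_\infty|\rho_1-\rho_2|+|\gamma_1(t)-\gamma_2(t)|$. Hence:
\begin{itemize}
\item $\|V_1-V_2\|_\infty\le W_\eta(0)\bigl(\|v'\|_\infty\|\rho_1-\rho_2\|_{L^1}+\eta|\gamma_1-\gamma_2|\bigr)$, with the $\eta$-factor absorbed via $\int W_\eta=W_0$;
\item $\|\partial_x(V_1-V_2)\|_\infty\le (2W_\eta(0)+\|W_\eta'\|_\infty\cdot\text{const})\bigl(\|v'\|_\infty\|\rho_1-\rho_2\|_\infty+|\gamma_1-\gamma_2|\bigr)$.
\end{itemize}
The second bound contains a pointwise difference, which is the main obstacle. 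To avoid it, we would not put the derivative on $V_1-V_2$. Instead, we integrate by parts against the test function in the entropy formulation, so that only $V_1-V_2$ (not its derivative) multiplies $\partial_x\rho_2$ or $\rho_2\,\partial_x\phi$. The boundary term $W_\eta(0)\cdot(\ldots)$ and the $W'_\eta$-integral then produce contributions bounded by $L^1$ differences times $\|\rho\|_{L^1}$ or $\mathrm{TV}$. This matches the structure of $K$: the term $W_\eta(0)\,\mathrm{TV}$ comes from $\mathrm{TV}(\rho_2(t))\le\exp(TC_1)\mathrm{TV}(\rho_0)$ by Theorem~\ref{thm:ex_uniq_sNV_extended}(3), while the terms $2W_\eta(0)\|\rho^0\|_{L^1}$ and $\|W'_\eta\|_\infty\|\rho^0\|_{L^1}$ come from the two pieces of $\partial_x$ of the convolution together with $L^1$-conservation (2).

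\textbf{Step 3: Gronwall.} Combining the steps gives
\[
\tfrac{d}{dt}\|\rho_1-\rho_2\|_{L^1}\le K\|v'\|_\infty\|\rho_1-\rho_2\|_{L^1}+K|\gamma_1(t)-\gamma_2(t)| .
\]
This is applied on each interval $[t^k,t^{k+1})$ where $\gamma_i$ is constant, and the pieces are glued by $L^1$-continuity in time. Gronwall's lemma then yields the stated bound with the factor $\exp(KT\|v'\|_\infty)$.
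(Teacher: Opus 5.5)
Your skeleton is the same as the paper's. Doubling of variables for the two transport equations with frozen velocities gives an inequality of type \eqref{eq:2.3.2}, with integrands $|\partial_x\rho|\,|V_{\gamma_1}-V_{\gamma_2}|$ and $|\rho|\,|\partial_x(V_{\gamma_1}-V_{\gamma_2})|$. This is then closed with the two-variable Lipschitz bound for $v_\gamma$, $L^1$-conservation, the TV bound and Gronwall.

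\textbf{The gap.} It sits exactly at the obstacle you flag in Step 2, and your remedy does not work. The derivative on $V_{\gamma_1}-V_{\gamma_2}$ cannot be integrated away. It comes from the term $\mathrm{sign}(\rho-c)\,\partial_x f(t,x,c)\,\phi$ of the entropy inequality, and shifting it onto the test function puts it on $\mathrm{sign}(\rho_1-\rho_2)\,\phi$, whose variation you do not control. Equivalently, $d=\rho_1-\rho_2$ solves $\partial_t d+\partial_x(d\,V_{\gamma_1})=-\partial_x\bigl(\rho_2(V_{\gamma_1}-V_{\gamma_2})\bigr)$, and an $L^1$ bound on $d$needs the $L^1$ norm of this divergence-form source. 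If only $\rho_2(V_{\gamma_1}-V_{\gamma_2})\,\partial_x\phi$ survived, letting $\phi\to 1$ would give $L^1$-contraction for different velocities, which is false.

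\textbf{The fix.} Do not take $\sup_x$ of $\partial_x(V_{\gamma_1}-V_{\gamma_2})$ at all. Keep $\int_{\mathbb{R}}|\rho(t,x)|\,|\partial_x(V_{\gamma_1}-V_{\gamma_2})(t,x)|\,dx$. Treat the $W_\eta'$-integral as you did: its bound is constant in $x$, so $\|\rho\|_{L^1}$ legitimately appears. For the two boundary terms, use H\"older the other way round together with the maximum principle:
\[
\int_{\mathbb{R}}|\rho(t,x)|\bigl(|\rho_1-\rho_2|(t,x+\eta)+|\rho_1-\rho_2|(t,x)\bigr)\,dx\le 2\|\rho(t,\cdot)\|_{L^\infty}\|\rho_1(t,\cdot)-\rho_2(t,\cdot)\|_{L^1}\le 2\rho^{\max}\|\rho_1(t,\cdot)-\rho_2(t,\cdot)\|_{L^1}.
\]
With this the Gronwall argument closes.

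\textbf{Consequence for the constant.} The rate is then not $K\|v'\|_\infty$ with the stated $K$. In the exponent, $2W_\eta(0)\|\rho_1^0\|_{L^1}$ must be replaced by $2W_\eta(0)\rho^{\max}$. Alternatively, since $2\|\rho\|_{L^\infty}\le\mathrm{TV}(\rho)$ for $\rho\in L^1\cap\mathrm{BV}$, it can be replaced by a second copy of the TV term.

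Your claim that the two boundary pieces produce $2W_\eta(0)\|\rho^0\|_{L^1}$ ``together with $L^1$-conservation'' is correct only for their $|\gamma_1-\gamma_2|$-contributions, whose multiplier does not depend on $x$. For the $\|v'\|_\infty$-part it amounts to $\int fg\le\|f\|_{L^1}\|g\|_{L^1}$, which fails in general (e.g.\ $f=g=\chi_{[0,\delta]}$ with $\delta<1$).

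The paper's appendix reaches its $K$ in the exponent through precisely this invalid step. It pulls $\sup_t\|\rho_1(t,\cdot)\|_{L^1}$ out of $\int|\rho_1|\bigl(|\rho_1-\rho_2|(\cdot+\eta)+|\rho_1-\rho_2|\bigr)\,dx$. So do not try to reproduce the stated constant in the exponent; prove the estimate with the corrected rate. For the later uses of the lemma (measurability in Lemma~\ref{lem:mb_sols}, Lipschitz dependence of $\mathcal{S}^a_t$), only the fact that the constants are deterministic matters.
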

\noindent
Since $\gamma_1$ and $\gamma_2$ are arbitrary but deterministic, to establish the above result, we can proceed similarly to classical uniqueness proofs with the additional aspect of differing velocity functions. This is captured by the Lipschitz estimate in both variables:
 \begin{align*}
    \left\lvert v_{\gamma_1}(\rho_1)-v_{\gamma_2}(\rho_2)\right\rvert \leq 
    \lVert v' \rVert_{\infty}  \left\lvert \rho_1 - \rho_2 \right\rvert 
    + \left\lvert \gamma_1 - \gamma_2 \right\rvert, 
\end{align*}
which carries through the subsequent estimates. As major parts of the proof follow classical techniques, see e.g., \cite{BlandinGoatin2016, Chiarello2018, Friedrich2018}, the full derivation is provided in Appendix \ref{subsec:proof_lemma_stab}.
  
\subsection{Measurability of solutions}\label{subsec:well_posed_exp}
We now transition from deterministic perturbations to random noise with corresponding solutions, presenting the first of our three key contributions.
Similarly, as described in \cite[2.2]{Barth_2016} and \cite[3.11]{Mishra2012}, we establish the measurability of solutions to \eqref{eq:sNV} using the stability result above.

\begin{lemma}[Measurability of solutions]\label{lem:mb_sols}\text{ }\\
   Let assumptions (A), (B) and (C) hold, and let $\rho$ be the weak entropy solution from Theorem \ref{thm:ex_uniq_sNV_extended}. Moreover, let $t \in [0,T]$. 
Then, for all  $x \in \mathbb{R}$ there exist random variables
   $$ \rho^*(t,x):(\Omega, \mathcal{F}) \rightarrow (\mathbb{R}, \mathcal{B}(\mathbb{R}))$$
   such that
   $$  \mathbb\rho^*(t,x)(\omega)=\rho(t,x)(\omega) $$
for almost all $x \in \mathbb{R}$ and almost all $\omega \in \Omega$. In other words, $\rho^*(t,\cdot)(\cdot)$ is a measurable modification of $\rho(t,\cdot)(\cdot)$.
\end{lemma}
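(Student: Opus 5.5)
We factor the random solution through the finite-dimensional noise vector, using the stability estimate of Lemma \ref{lem:stab_result}. Write $\boldsymbol{\epsilon}(\omega)=(\epsilon^1(\omega),\dots,\epsilon^{R_T}(\omega))$. By Assumption (C), $\boldsymbol{\epsilon}$ is an $(\Omega,\mathcal{F})$-$([-\tau,\tau]^{R_T},\mathcal{B})$ measurable random vector. For a deterministic vector $\gamma=(\gamma^1,\dots,\gamma^{R_T})\in[-\tau,\tau]^{R_T}$, let $S_t(\gamma)\in L^1(\mathbb{R})$ be the unique weak entropy solution at time $t$ of \eqref{eq:sNV}, with the piecewise constant perturbation built from $\gamma$ and initial datum $\rho_0$. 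It exists by Theorem \ref{thm:ex_uniq_sNV_extended}, applied to a probability space carrying a constant noise. By uniqueness, $\rho(t,\cdot)(\omega)=S_t(\boldsymbol{\epsilon}(\omega))$ for every $\omega$.

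The first step is to show that $S_t:[-\tau,\tau]^{R_T}\to L^1(\mathbb{R})$ is continuous, indeed Lipschitz. Apply Lemma \ref{lem:stab_result} with $\rho_1^0=\rho_2^0=\rho_0$. Since
\[
\int_0^T|\gamma_1(s)-\gamma_2(s)|\,ds\le \delta_R\sum_{k}|\gamma_1^k-\gamma_2^k|,
\]
the lemma gives $\|S_t(\gamma_1)-S_t(\gamma_2)\|_{L^1}\le e^{KT\|v'\|_\infty}K\delta_R|\gamma_1-\gamma_2|_1$, where $K$ depends only on $\rho_0$, $W_\eta$, $v$ and $\tau$. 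Consequently $\omega\mapsto \rho(t,\cdot)(\omega)=S_t\circ\boldsymbol{\epsilon}(\omega)$ is $\mathcal{F}$-$\mathcal{B}(L^1(\mathbb{R}))$ measurable, as a composition of a continuous map with a measurable one. Because $L^1(\mathbb{R})$ is separable, Borel measurability and strong (Bochner) measurability coincide, so $\rho(t,\cdot)$ is a strongly measurable $L^1$-valued random variable.

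The second step passes from an $L^1$-valued random variable to a jointly measurable pointwise representative. This is the step I expect to be the main obstacle, since an element of $L^1$ is only an equivalence class. I would proceed by approximation. Choose $L^1$-valued simple random variables $Z_n=\sum_{i}\mathbbm{1}_{A_{n,i}}(\omega)f_{n,i}$, with $A_{n,i}\in\mathcal{F}$ and fixed Borel representatives $f_{n,i}$, such that $\|Z_n(\omega)-\rho(t,\cdot)(\omega)\|_{L^1}\le 2^{-n}$ for every $\omega$. Such $Z_n$ can be built concretely: take a countable dense set $\{g_j\}\subset L^1$ of Borel functions and let $Z_n(\omega)$ be the first $g_j$ within $2^{-n}$ of $\rho(t,\cdot)(\omega)$. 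The sets involved are measurable by the first step, since $\omega\mapsto\|\rho(t,\cdot)(\omega)-g_j\|_{L^1}$ is measurable. Alternatively, one can use $Z_n=S_t(\gamma_n(\omega))$ on a grid discretization of $\boldsymbol{\epsilon}$. Each $(x,\omega)\mapsto Z_n(\omega)(x)$ is $\mathcal{B}(\mathbb{R})\otimes\mathcal{F}$-measurable. By summability of $2^{-n}$ and Fubini, $\sum_n|Z_{n+1}-Z_n|(x,\omega)<\infty$ for $\mathrm{Leb}\otimes\mathbb{P}$-almost every $(x,\omega)$. Hence the limit $\rho^*(t,x)(\omega):=\limsup_n Z_n(\omega)(x)$ is jointly measurable, and for every $\omega$ it coincides with $\rho(t,\cdot)(\omega)$ for almost every $x$, since $Z_n(\omega)\to\rho(t,\cdot)(\omega)$ in $L^1$ and therefore along the full sequence almost everywhere thanks to the summable rates. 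By the maximum principle in Theorem \ref{thm:ex_uniq_sNV_extended}, we may also clip to $[0,\rho^{max}]$.

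Finally, joint measurability of $\rho^*(t,\cdot)(\cdot)$ on $\mathbb{R}\times\Omega$ implies, by the section property of product-measurable functions, that for every fixed $x$ the map $\omega\mapsto\rho^*(t,x)(\omega)$ is $\mathcal{F}$-$\mathcal{B}(\mathbb{R})$ measurable. The identity $\rho^*=\rho$ holds for every $\omega$ and almost every $x$, hence $\mathrm{Leb}\otimes\mathbb{P}$-almost everywhere. This gives the asserted measurable modification.

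Beyond the representative issue, one subtle point is the pointwise meaning of $\rho(t,x)(\omega)$ in the statement. If it is understood as the BV representative, for instance the right-continuous one, it still agrees almost everywhere in $x$ with the $L^1$ class. Completeness of $(\Omega,\mathcal{F},\mathbb{P})$ ensures that modifications on null sets remain measurable.
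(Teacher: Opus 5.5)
Your proof is correct, but it takes a different route from the paper's.

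\textbf{The paper's route.} The paper never shows directly that $\omega\mapsto\rho(t,\cdot)(\omega)$ is an $L^1(\mathbb{R})$-valued random variable. It proceeds in three steps:
\begin{enumerate}
\item It proves joint measurability for finitely-valued noise via the finite union $\bigcup_i A_i\times\Omega_i$.
\item It quantizes each $\epsilon^k$ with mean-square error of order $N^{-2}$. It then uses the stability lemma \emph{in expectation}, exploiting that $K$ is deterministic, to show that the quantized solutions $\rho_{(N)}(t,\cdot)$ are Cauchy in $L^1(\mathbb{R}\times\Omega)$, and takes $\rho^*$ as the limit.
\item It uses Markov's inequality and Borel--Cantelli to get almost sure convergence of the quantized noise. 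This gives pathwise convergence $\rho_{(N_\ell)}(t,\cdot)(\omega)\to\rho(t,\cdot)(\omega)$ and the identification for almost every $\omega$.
\end{enumerate}

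\textbf{Your route and what it buys.} You apply the stability lemma pathwise with equal initial data, so the solution map $S_t$ is Lipschitz on $[-\tau,\tau]^{R_T}$. Measurability of $\rho(t,\cdot)$ as an $L^1$-valued map is then immediate. You finish with the standard representative argument, using approximations whose errors are summable uniformly in $\omega$. This removes the need for quantization rates, subsequences and Borel--Cantelli. It gives $\rho^*(t,\cdot)(\omega)=\rho(t,\cdot)(\omega)$ in $L^1(\mathbb{R})$ for every $\omega$, not just almost every $\omega$. It also yields the stronger fact that $\rho(t,\cdot)$ is a continuous function of the noise vector, so every Borel functional of it is measurable.

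Your grid variant $Z_n=S_t(\gamma_n(\omega))$ is precisely the paper's quantization step with a deterministic sup-bound in place of a mean-square bound. That is why the paper's step (2c) becomes unnecessary. What the paper's approach gives in exchange is quantitative information: an explicit $L^1(\mathbb{R}\times\Omega)$ rate for solutions driven by quantized noise, in the spirit of the uncertainty-quantification literature it follows.

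\textbf{A minor caveat.} Passing from ``for every $\omega$, for a.e.\ $x$'' to ``$\lambda\otimes\mathbb{P}$-a.e.'' tacitly requires the exceptional set to be product-measurable. That is not automatic for an arbitrary pointwise choice of $\rho(t,x)(\omega)$. It does hold, for example, for the right-continuous representative: by your first step, that representative is a limit of the jointly measurable averages $m\int_x^{x+1/m}\rho(t,y)(\omega)\,dy$. In any case, the iterated statement you actually prove is what the lemma asserts, and it is exactly what the paper obtains.
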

\noindent
Since the underlying probability space is complete, i.e., the sigma-algebra $\mathcal{F}$ contains all $\mathbb{P}$-zero sets, the same holds for the product space $$( \mathbb{R} \times \Omega ,  \mathcal{B}(\mathbb{R})\otimes\mathcal{F} , \lambda \otimes \mathbb{P}).$$ Thus, for the entropy solution for any fixed $t \in [0,T]$,
the mappings
$$ \rho(t,\cdot)(\cdot):  \mathbb{R} \times \Omega  \rightarrow \mathbb{R}$$ are   $\mathcal{B}(\mathbb{R})\otimes\mathcal{F}- \mathcal{B}(\mathbb{R}) $ measurable.
This yields now that quantities as 
$$ \int_{\mathbb{R}} \phi(x) \mathbb{E}[ \rho(t,x)^p] dx$$
with $p > 0$ and $\phi \in L^{\infty}(\mathbb{R};\mathbb{R}^+)$
or $$  \mathbb{E}[ \rho(t,x)^p],$$
with $p > 0$ are well-defined for all $t \in [0,T]$ and almost all $x \in \mathbb{R}$.

\begin{proof}
\textbf{(1)}
  Assume now that the process $\epsilon$  takes only finitely many values. Thus, the set of possible values of  $\epsilon$ can be defined as 
  \begin{align*}
      \{ \gamma_{1}, ..., \gamma_{L} \}, \quad L \in \mathbb{N},
  \end{align*}
  with $\gamma_i$ as in Lemma \ref{lem:stab_result}.
  This allows us to assign for $i \in I = \{1,...,L\}$ the pre-image subsets 
  \begin{align*}
    \Omega_i:= \{ \omega \in \Omega : \epsilon(\cdot)(\omega)=\gamma_i \} \in \mathcal{F}, \quad i \in I.
  \end{align*}
For each $\omega \in \Omega_i$ we obtain the same entropy solution
$ \rho(t,\cdot)(\omega)$ that we will denote by $\rho_{\gamma_i}(t,\cdot)$. Since
$\rho_{\gamma_i}(t,\cdot) \in L^1(\mathbb{R})$ we have that
$$A_i:=\{x \in \mathbb{R}: \rho_{\gamma_{i}}(t,x)\leq \alpha \} \in \mathcal{B}(\mathbb{R})$$
for every $\alpha \in \mathbb{R}$. We have 
  \begin{align*}
      \{ (x,\omega)\in  \mathbb{R} \times \Omega : \rho(t,x)(\omega) \leq \alpha \} 
      &=\bigcup_{i \in I} \{ (x,\omega)\in  \mathbb{R} \times \Omega_i : \rho(t,x)(\omega) \leq \alpha \}\\
      &=\bigcup_{i \in I} \{x \in \mathbb{R}: \rho_{\gamma_{i}}(t,x)\leq \alpha\} \times \Omega_i \\ & =\bigcup_{i \in I} A_i \times \Omega_i \in  \mathcal{B}(\mathbb{R}) \otimes \mathcal{F}
  \end{align*}
due to the finiteness of $I$.
Thus, for a finite-dimensional error process, the weak entropy solution
$$ \rho(t,\cdot)(\cdot):  \mathbb{R} \times \Omega  \rightarrow \mathbb{R}$$ is   $\mathcal{B}(\mathbb{R})\otimes\mathcal{F}- \mathcal{B}(\mathbb{R}) $ measurable for any $t \in [0,T]$.\\

\noindent \textbf{(2)} For the general case, we use an approximation argument:\\

   \noindent \textbf{(2a)} Since the random variables $\epsilon^k$ are uniformly bounded by $\tau$, using quantization, see, e.g., Theorem 5.2(b) in \cite{pages}, we can find discrete random variables: 
   \begin{align*}
    \epsilon_{(N)}^1, \, ... \, , \epsilon_{(N)}^{R_T}, 
  \end{align*} which take at most $N \in \mathbb{N}$ values and are uniformly bounded by $\tau$,
  such that
  \begin{align*}
      \mathbb{E} \left[ \max_{k=1,..,R_T} \lvert \epsilon_{(N)}^k-\epsilon^k \rvert^2 \right] \leq C(\tau,R_T)^2 N^{-2}
  \end{align*}
  for a constant $C(\tau,R_T)>0$.
  For the corresponding error process
  \begin{align*}
      \epsilon_{(N)}(t;\omega)=\sum_{k=1}^{R_T} e^k_{(N)}(\omega) \chi_{[t_k, t_{k+1})}(t)
  \end{align*}
  we have 
  \begin{align*}
      \mathbb{E} \int_0^T \lvert \epsilon_{(N)}(t)-\epsilon(t) \rvert \,dt \leq T \sqrt{C(\tau,R_T)} \cdot N^{-1}
  \end{align*} 
  using the Lyapunov inequality.
  In particular, we have 
  \begin{align}\label{eq:conv_gep_N}
      \mathbb{E} \int_0^T \lvert \epsilon_{(N+M)}(t)-\epsilon_{(N)}(t) \rvert \,dt \leq T \sqrt{C(\tau,R_T)} \cdot N^{-1}
  \end{align} 
  for all $N,M \in \mathbb{N}$.

\noindent \textbf{(2b)} Denote now by $\rho_{(N)}$ the unique weak entropy solution corresponding to $\epsilon_{(N)}$. Let $t \in [0,T]$. Since the constant $K$ in Lemma \ref{lem:stab_result} is deterministic,  equation \eqref{eq:conv_gep_N} gives
$$ \mathbb{E} \| \rho_{(N+M)}(t;\cdot) - \rho_{(N)}(t;\cdot) \|_{L^{1}(\mathbb{R})} \leq K \exp( KT \|v'\|_{\infty})T \sqrt{C(\tau,R_T)} \cdot N^{-1}
$$
and therefore $ \rho_{(N)}(t,\cdot)(\cdot)$ is a Cauchy-sequence in $L^{1}(\mathbb{R} \times \Omega)$.
By completeness there exists a measurable $\rho^*(t,\cdot)(\cdot) \in L^{1}(\mathbb{R} \times \Omega)$ such that
$$ \mathbb{E} \| \rho_{(N)}(t;\cdot) - \rho^*(t;\cdot) \|_{L^{1}(\mathbb{R})} \rightarrow 0, \qquad N \rightarrow \infty,
$$
as well
as \begin{align} \label{rho*-conv}
 \| \rho_{(N_{\ell})}(t,\cdot)(\omega) - \rho^*(t,\cdot)(\omega) \|_{L^1(\mathbb{R})} \rightarrow 0, \qquad \ell \rightarrow \infty, \end{align}
for almost all $\omega \in \Omega$ for a sub-sequence $(\rho_{(N_{\ell})}(t,\cdot)(\cdot))_{\ell  \in \mathbb{N}}$ of  $(\rho_{(N)}(t,\cdot)(\cdot))_{N \in \mathbb{N}}$.

\noindent \textbf{(2c)} Using the discrete structure of $\epsilon$ as well as Boole's and Markov's inequality, we have
  \begin{align*}
      \sum_{N=1}^\infty \mathbb{P}\Bigl( \max_{k=1,..,R_T} \lvert \epsilon_{(N)}^k-\epsilon^k\rvert \geq \psi\Bigr) &\leq
      \sum_{N=1}^\infty \sum_{k=1}^{R_T} \mathbb{P}\Bigl(\lvert \epsilon_{(N)}^k-\epsilon^k\rvert \geq \psi\Bigr)\\
      &\leq \sum_{N=1}^\infty \sum_{k=1}^{R_T} \dfrac{ \mathbb{E} \left[\lvert \epsilon_{(N)}^k-\epsilon^k \rvert^2 \right]}{\psi^2} \\ &\leq  \sum_{N=1}^\infty \dfrac{R_T C N^{-2}}{\psi^2} < \infty.
  \end{align*}
  Therefore, we can use the Borel-Cantelli lemma and conclude that
  \begin{align*}
      \max_{k=1,..,R_T} \lvert \epsilon^k_{(N)}(\omega)-\epsilon^k(\omega) \rvert \rightarrow 0,\; N\rightarrow\infty,
  \end{align*}
  for almost all $\omega \in \Omega$.
  By the boundedness of the noise terms and the dominated convergence theorem, it follows that
  \begin{align*}
      \int_0^T \lvert \epsilon_{(N)}(t)(\omega)-\epsilon(t)(\omega) \rvert \,dt \rightarrow 0,\qquad N\rightarrow\infty, 
  \end{align*}
   for almost all $\omega \in \Omega$.
Lemma \ref{lem:stab_result}  and the previous estimate now imply that
\begin{align}\label{rho-conv} \| \rho_{(N_{\ell})}(t;\cdot)(\omega) - \rho(t;\cdot)(\omega) \|_{L^{1}(\mathbb{R})} \rightarrow 0, \qquad \ell \rightarrow \infty, \end{align}
for almost all $\omega \in \Omega$. 
 Equations \eqref{rho*-conv} and \eqref{rho-conv}  now yield the following result
  \begin{align*}
&  \| \rho^*(t,\cdot)(\omega)-  \rho(t,\cdot)(\omega) \|_{L^1(\mathbb{R})} \\ & \quad \leq \lim_{\ell \rightarrow \infty} \| \rho^*(t,\cdot)(\omega)-  \rho_{(N_{\ell})}(t,\cdot)(\omega) \| _{L^1(\mathbb{R})}+ \lim_{\ell \rightarrow \infty} \| \rho(t,\cdot)(\omega)-  \rho_{(N_{\ell})}(t,\cdot)(\omega) \| _{L^1(\mathbb{R})} =0
  \end{align*} 
  for almost all $\omega \in \Omega$.
  Consequently, we must have that
$$ \rho^*(t,x)(\omega)=  \rho(t,x)(\omega) $$
for almost all $x \in \mathbb{R}$ and  almost all $\omega  \in \Omega$.
 \end{proof} 

\begin{remark}
The limit $\rho^*$ does not depend on the choice of the approximation sequence  $\epsilon_{(N)}$.
For  two different $L^1$-approximations $\epsilon_{(N),1}$, $\epsilon_{(N),2}$ of $\epsilon$, 
Lemma \ref{lem:stab_result} again implies that  
\begin{align*}
     &  \lVert \rho_{{(N),1}}(t,\cdot;\omega) -         \rho_{{(N),2}}(t,\cdot;\omega) \rVert_{L^1(\mathbb{R})} 
      \leq 
      K \exp( KT \|v'\|)
      \int_0^T   \lvert \epsilon_{(N),1}(t)-\epsilon_{(N),2}(t) \rvert \,dt \rightarrow 0
\end{align*}
for $N \rightarrow\infty$.
\end{remark}
\begin{remark}
    As already mentioned, we employed in \cite{Boehme_2025} a Godunov-type approximation $\rho^{\Delta x}(t,\cdot)$, which converges for fixed $t \in [0,T]$ under a CFL-condition along a suitable  sub-sequence in $L^1_{\textrm{loc}}(\mathbb{R})$ to $\rho(t,\cdot)$. Using dominated convergence, the same holds true now for the convergence of $ \mathbb{E} [\rho^{\Delta x}(t,\cdot)^p]$ to $ \mathbb{E}[\rho(t,\cdot)^p]$ in $L^1_{\textrm{loc}}(\mathbb{R})$ for any $p \in \mathbb{R}$. 
For more details on the Godunov scheme see Section \ref{sec:numerics}.
\end{remark}

\subsection{Local solution operator}\label{subsec:loc_sol_op}

Now let us consider the  parametrized deterministic NV model
\begin{align*}
    \partial_t \xi +\partial_x \Bigl(\xi \bigl(W_\eta*v_a(\xi,t)\bigr)\Bigr)=0
\end{align*}
with initial value and modified velocity function:
\begin{align*}
   \xi(0,x)=\xi_0(x),\qquad v_a(\xi,t) = \max\{0, v(\xi) + a\},
\end{align*}
where $|a| \leq v_{max} $. Under assumptions (A), (B) and \eqref{eq:CP_BV}, this equation has a unique weak entropy solution $\xi^{a}$ and we can define
the operators
\begin{align}\label{eq:sol_operator}
    \mathcal{S}_{t}^{a}\bigl[\xi_0\bigr](\cdot):=\xi^{a}(t,\cdot), \qquad t \in [0,T]. 
\end{align}
In our stochastic model we have piecewise constant noise in time, such that we can use these operators also to describe the evolution of our stochastic NV-model.
Using the time-grid from \eqref{eq:err_very_general} and setting $$\gamma_k=\gamma(k \delta_R), \quad k=0, \ldots, R_T,$$
we have
$$ \rho(t_k+\delta t,\cdot)=\mathcal{S}_{\delta t}^{\gamma_k}\bigl[\rho(t_k,\cdot)\bigr](\cdot), \quad \delta t \in [0, \delta_R], \,\, k=0,1, \ldots R_T-1.$$
Thus, locally the evaluation of $\rho$ can be described by the solution operators $\mathcal{S}_{\delta t}^a$ with $\delta t \leq \delta_R$, which we therefore call {\it local solution operators}.
Lemma \ref{lem:stab_result} shows that these operators are locally Lipschitz in $\xi$ and $a$. We have
\begin{align*} \| \mathcal{S}_{t}^{a_1}\bigl[\xi_1\bigr]- \mathcal{S}_{t}^{a_2}\bigl[\xi_2\bigr] \|_{L^{1}(\mathbb{R})}
\leq  \exp(Kt \|v'\|_{\infty}) \left( {\left\lVert \xi_1-\xi_2 \right\rVert}_{L^1(\mathbb{R})} +K t |a_1-a_2| \right),
\end{align*}
with 
$$ K=
 W_{\eta}(0) \left( 2 \| \xi_1 \|_{L^1(\mathbb{R})} +  \exp(tC_1)TV(\xi_1;\mathbb{R}) \right)+  \|W'_{\eta} \|_{\infty} \| \xi_1 \|_{L^1(\mathbb{R})} . $$

\subsection{Mean velocity function and mean value proxy}\label{subsec:mean_velo}

If we want to estimate quantities as the expected density
$ \mathbb{E}[\rho(t,x)]$,
this can be done using the Godunov-scheme and the standard Monte-Carlo approach, i.e., we sample $M$ i.i.d. copies of $\rho^{\Delta x}(t,x)$ and average these, i.e.,
\begin{align*}
    \mathbb{E}[\rho(t,x)] \approx \frac{1}{M} \sum_{k=1}^M \rho^{\Delta x, (k)}(t,x),
\end{align*}
where the superscript $(k)$ denotes the i.i.d. copies. 
Since this approach is computationally demanding, one might look for other approximations of the mean value.\\
In general, the nonlinear coupling between $\rho$ and $\epsilon$ prevents the direct reconstruction of the expected density $\mathbb{E}[\rho(t,x)]$ from the statistical properties of $\epsilon$ alone. 
In particular, the stochastic flux 
\begin{align*}
f_{\epsilon}(t,x,\rho)=\rho \bigl(W_\eta * v_\epsilon(\rho,\cdot)\bigr)(t,x)
\end{align*}
depends on $\rho$ in a nonlocal manner, which not only prevents us from obtaining a closed form equation for $\mathbb{E}[\rho(t,x)]$, but makes numerical evaluation using, e.g., collocation methods highly expensive. 
In terms of kinetic theory this issue is often referred to as the Closure-Problem. Whereas for example in \cite{Abdelmalik2016, Levermore_1996} minimization concepts are used to obtain closed equations in an approximate but analytical way,
\cite{cui2025} identifies expected drift-parameters by observing conditional trajectories on short time-intervals, leading to empirical results on the governing expectation law.
In our case, we leverage the fact that stochasticity in the sNV model is introduced solely through the flux, whereas the PDE mechanics remain deterministic.
In detail, for a given constant realization of the noise $a \in [-\tau,\tau]$ the density evolution is fully described by $\mathcal{S}_t^a$ as of \eqref{eq:sol_operator}. 
Hence, if the distribution $\nu_t$ of $\epsilon(t)$ is known, a natural candidate for the mean density is obtained by averaging these deterministic evolutions starting from the known initial value:
\begin{align*}
\tilde{m}(t,x) := \int_{-\tau}^{\tau} \mathcal{S}_t^{a}[\rho_0](x) \, \nu_t(da), 
\end{align*}
However, this is not directly computable as it requires knowledge of the solution operator for every $a$. Moreover, this equality can not be calculated for arbitrary $t \in (0,T]$. In general, we have
$$ \mathbb{E} \big [\rho(t_k+\delta t,\cdot) \ | \, \rho(t_k, \cdot) \big]=
\int_{-\tau}^{\tau} \mathcal{S}_t^{a}[\rho(t_k, \cdot)](x) \, \nu_t(da), \quad\delta t \in [0, \delta_R], $$
only if $\epsilon(t)$ is independent from $\rho(t_k,\cdot)$ for $t \in (t_k,t_{k}+\delta_R]$ and does not lead to an accessible expression for the mean density due to the non-linear nature of the local solution operator.\\

Instead, we analyze the deterministic proxy $\bar{m}(t,x)$ given by the NV model:
\begin{align}\label{eq:EsNV}\tag{EsNV}
     \partial_t \bar{m} +\partial_x\Bigl( \bar{m} \bigl( W_\eta *\bar{v}(\bar{m},t)\bigr) \Bigr)=0,
\end{align}
with initial condition $\bar{m}(0,x) := \rho_0(x)$, and the expected velocity $\bar{v}$ is defined by
\begin{align}\label{eq:eyp_density}
\bar{v}(\bar{m},t) := \int_{-\tau}^{\tau} v_a(\bar{m},t)  \nu_t(da),
\end{align}
where $\nu_t$ denotes the distribution of $\epsilon(t)$. For many error processes, $\bar{v}$ can be computed explicitly, see below.
Essentially, we are decoupling the noisy propagation from the perturbed densities and are only propagating \emph{one} expected density, using an averaged velocity function. This can also be understood as using the approximation:
\begin{align}\label{eq:exp_approx}
    \mathbb{E} \bigl[ \rho(t,x) {V}_{\epsilon}(t,x;\rho(t,\cdot)) \bigr] 
    \approx \mathbb{E} \left[ \rho(t,x)  \right]  \mathbb{E} \bigl[ {V}_{\epsilon}(t,x; \mathbb{E}  [\rho(t,\cdot)]) \bigr].
\end{align}

\begin{remark}
    As mentioned in Remark \ref{rem:on_sND}, we note that a similarly perturbed model like \eqref{eq:sND} does not exhibit a separable error contribution as captured in \eqref{eq:exp_approx}. This prevents the calculation and use of an expected velocity, which is an advantage of the  formulation \eqref{eq:sNV}.
\end{remark}
We will study the validity of the approximation aforementioned in our computations carried out for a high-noise-autocorrelated error process in Section \ref{sec:num_results}. 
Before introducing such a specific class of processes and related numerical schemes in Section \ref{subsec:markov_type_error}, we illustrate the approximation capabilities of \eqref{eq:EsNV} by comparing its characteristics with those of \eqref{eq:sNV}.

\begin{definition}[Characteristics of \eqref{eq:sNV} and \eqref{eq:EsNV}]\label{def:char}\text{}\\
Let 
    \begin{align*}
    V_\epsilon[\rho](t,x)&:=(W_\eta * v_\epsilon(\rho,\cdot))(t,x), \quad \text{and}\quad
    \bar{V}[\bar{m}](t,x):=(W_\eta * \bar{v}(\bar{m},t))(t,x).
    \end{align*}
For any fixed $\omega \in \Omega$, let $\rho$ be the unique weak entropy solution of (sNV) from Theorem \ref{thm:ex_uniq_sNV_extended}. The stochastic characteristics $X_{\rho,\epsilon}:[0,T] \times \mathbb{R} \times [0,T] \to \mathbb{R}$ are defined as the solutions of the integral equation
\begin{align}\label{eq:chars_integral_eq}
    X_{\rho,\epsilon}[t_0,x_0](t) := x_0 + \int_{t_0}^{t} V_\epsilon[\rho]\bigl(s, X_{\rho,\epsilon}[t_0,x_0](s)\bigr) \, ds, \qquad t \in [t_0,T],
\end{align}
for $(t_0,x_0) \in [0,T] \times \mathbb{R}$, or equivalently the ODE
\begin{align*}
    \frac{d}{dt} X_{\rho,\epsilon}[t_0,x_0](t) &= V_\epsilon[\rho]\bigl(t, X_{\rho,\epsilon}[t_0,x_0](t)\bigr), \qquad t \in (t_0,T], \\
    X_{\rho,\epsilon}[t_0,x_0](t_0) &= x_0. \nonumber
\end{align*}
The deterministic characteristics $\bar{X}$ for \eqref{eq:EsNV} are defined similarly, with $V_\epsilon$ replaced by $\bar{V}$.
\end{definition}
\noindent
For a discussion of the well‑posedness and the explicit Euler time‑marching used for the numerical sampling, see Chapter 3.5 of \cite{Boehme_2025} and the references therein.

\begin{example}\label{ex:standart_ex}
As accompanying examples for the following sections, we consider two different initial traffic conditions. We either assume a low congestion case $\rho_0^{\text{low}}$ or a high congestion case $\rho_0^{\text{high}}$. Thus, we consider the initial data for $x \in \mathbb{R}$:\\
\noindent%
\begin{minipage}{0.5\textwidth}
\centering
\[
\rho_0^{\text{low}}(x) =
\begin{cases}
0.5, & \text{if } x\in\left[ \frac{1}{3}, \frac{2}{3} \right], \\
0.2, & \text{else},
\end{cases}
\]
\end{minipage}%
\begin{minipage}{0.5\textwidth}
\centering
\[
\rho_0^{\text{high}}(x) =
\begin{cases}
0.9, & \text{if } x\in\left[0, 2 \right], \\
0.2, & \text{else}.
\end{cases}
\]
\end{minipage}\\
\noindent
For the rest of our work, we further fix the base-velocity $v(\rho)=1-\rho^2$ with $\rho^{\max}=1$ and the concave kernel 
$$
W_\eta^{\text{conc.}}(x) = \frac{3}{2\eta^3}\left(\eta^2-x^2\right), \quad \text{with} \quad \eta = 0.2.
$$
\end{example}
\noindent
Using Example \ref{ex:standart_ex}, we compare the characteristics of 15 \eqref{eq:sNV} realizations to the ones of \eqref{eq:EsNV} in Figure \ref{fig:chars_rho_high}. The noise is generated using the Markovian process \eqref{eq:JP_resampling}, as introduced in the following section.

\begin{figure}[htb!]
    \centering
    \includegraphics[width=1\textwidth]{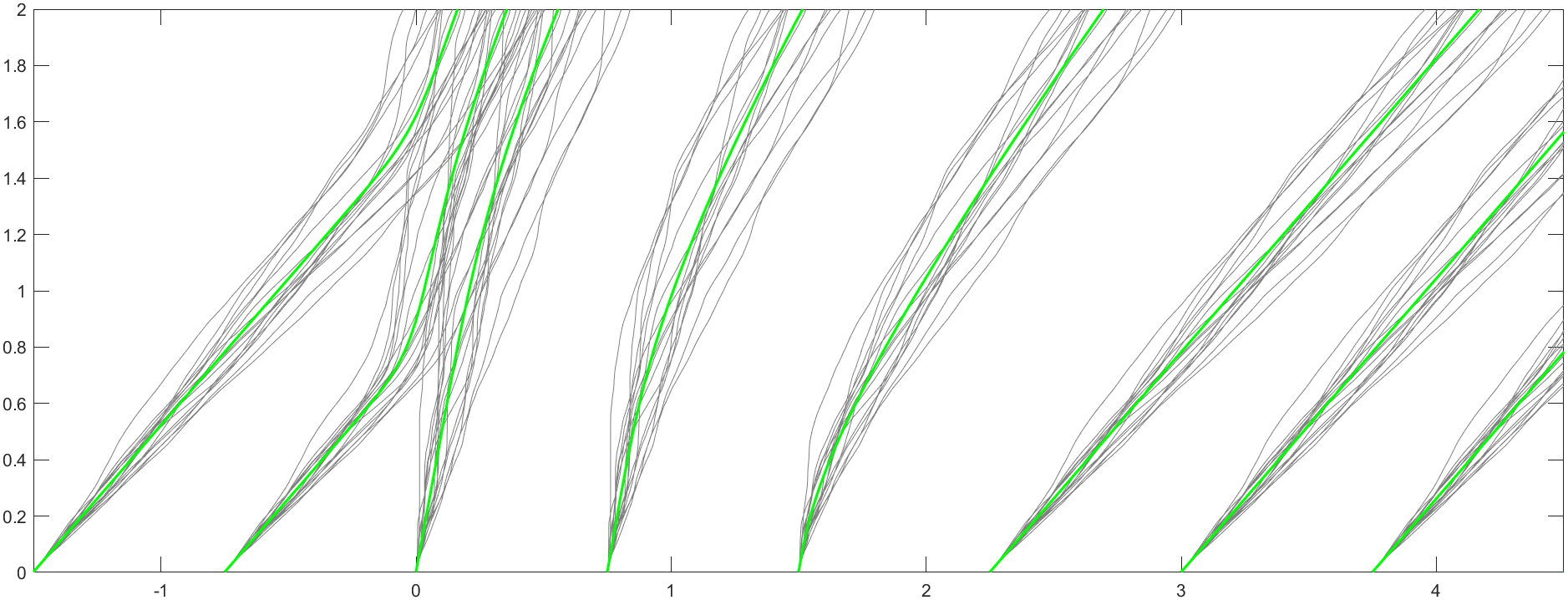}
    \caption{Characteristics based on $\rho_0^{\text{high}}$ to \eqref{eq:sNV} in grey and
    \eqref{eq:EsNV} in green.}
    \label{fig:chars_rho_high}
\end{figure}

Note that for any fixed realization $\omega$, the characteristics of \eqref{eq:sNV} do not cross. As we can see, the deterministic characteristics of the expected flux model \eqref{eq:EsNV}  provide a good fit to the average propagation of the stochastic characteristics, despite the natural increase in variance due to noise accumulation.
In Section \ref{sec:num_results} an error analysis in the characteristics-space using the same high-noise-autocorrelated error processes will be carried out. The latter is now introduced. 

\subsection{Markov-type error processes}\label{subsec:markov_type_error}
Compared to the initial white‑noise approach of \cite{Boehme_2025}, we build on the relaxed assumption (C), which now allows for auto‑correlated noise increments in \eqref{eq:err_very_general}. As our numerical evaluations in Section \ref{sec:num_results} will demonstrate, the use of such increments can generate substantially stronger fluctuations in the resulting densities while also supporting the physical interpretation.\\
Markov chains are natural candidates for  our error processes. A suitable definition of Markov chains for our purposes as a dynamical system can be extracted from Proposition 11.6 in \cite{kallenberg2021foundations} and page 10 of \cite{eberlein}, which states that any Markov process $(X_k)_{k \in \mathbb{N}_0}$ with values on $(S, \mathbb{B}(S))$ where $S \in \mathcal{B}(\mathbb{R}),$ can be written as a dynamical system
$$ X_{k+1}=f_{k}(X_k,U_{k+1}), \qquad k \in \mathbb{N}_0,$$
where the maps $f_k:S \times S' \rightarrow S$ are measurable and $X_0,U_1, U_2, \ldots, U_k, \ldots$ are independent random variables
with $U_k$ taking values in a measurable space $(S', \mathcal{S}')$, allowing for an  acceptance-rejection sampling, which we leverage below. 
Using this definition, the transition probabilities of the Markov chain satisfy
$$ \mathbb{P}(X_{k+1} \in B | X_k=x)= \mathbb{P}( f_k(x,U_{k+1}) \in B) $$
for any $x \in S$, $B \in \mathcal{B}(S)$.
Naturally, we will work with $S=[-\tau,\tau]$ in the following.
\begin{definition}[Admissible Markovian error process]
\label{def:markov_error}
Let $(X_k)_{k \in \mathbb{N}_0}$ be a Markov chain with values in $(S, \mathbb{B}(S))$ and $X_0=0$. Then, the process
\begin{align*}
    \epsilon(t)=\sum_{k=0}^{R_T} X_k \chi_{[t^k,t^{k+1})}(t), \qquad t\in[0,T],
\end{align*}
where $t^k = k\delta_R$ and $R_T = \lfloor T/\delta_R \rfloor < \infty$, is called an admissible Markovian error process. 
\end{definition}
\noindent
The white noise of \cite{Boehme_2025} reads in the above setting  as
\begin{align}\label{eq:WN_sampling}
    X_{k+1}=f_k(X_k,U_{k+1})=U_{k+1}, \qquad X_0=0,\;\; U_1, \ldots, U_{k+1} \sim \mathcal{U}((-\tau,\tau)),
\end{align}
which directly gives the time-independent expression of the expected velocity, based on the corresponding admissible Markovian error process and definition of $v_a$:
\begin{align}\label{eq:bar_v_WN}
    \bar{v}(\bar{m},t) = \int_{-\tau}^{\tau} v_a(\bar{m},t)\nu_t(da)=\frac{1}{4\tau}\Bigl( \bigl(\tau+v(\bar{m})\bigr)^2 -\max\{0,v(\bar{m})-\tau\}^2 \Bigr).
\end{align}
Compared to this, the Markovian increments allow for modeling temporal correlation structures, e.g., by using a (particular) Jacobi-process.
So consider the stochastic differential equation (SDE)
\begin{align}\label{eq:JP_SDE}\tag{JP}
d\epsilon(t) = -\alpha\,\epsilon(t)\,dt + \sigma\sqrt{\bigl(\epsilon(t)+\tau)(\tau-\epsilon(t)\bigr)}\,dW_t, \quad t \in [0,T],  \quad \epsilon(0) = 0,
\end{align}
with a standard Wiener process $W=(W_t)_{t \in [0,T]}$, a symmetric bound $0<\tau\leq v^{\max}$ and noise parameters $\alpha,\sigma>0$, controlling the mean reversion and volatility. This process is a symmetric version of the classical Wright-Fisher diffusion process as considered, for example, in \cite{JP_finance} or  \cite{JP_general}. 
We have
$$ \mathbb{E}[\epsilon(t)]=0, \qquad t \geq 0$$
 and
 $$  \mathbb{E}[\epsilon(t)^2]= \frac{\sigma^{2}\tau^2}{2 \alpha + \sigma^2} \bigl(1- \exp(- (2\alpha+\sigma^2) t)\bigr), \qquad t \geq 0, $$
See, e.g., \cite{JP_finance}, also for expressions of higher moments. Moreover, the sample paths of the Jacobi process remain in $[-\tau,\tau]$, that is,
\begin{align}\label{eq:JP_in_tau}
    \mathbb {P}\bigl(\epsilon(t) \in [-\tau,\tau], \, t \in [0,T]\bigr)=1. 
\end{align}
We emphasize that uniform boundedness is a key property of the Jacobi process, which is generally not given for classical diffusion models such as the (generalized) Ornstein–Uhlenbeck process.
Although exact simulation of the Jacobi process is feasible, see e.g.,\ \cite{On_Wright_Fisher_basic_simulation}, it is computationally quite expensive. 
However, one can use an acceptance-rejection Euler scheme for its simulation on a grid 
$0=t_0<t_1<\ldots<t_N=T$. 
Set $X_0=0$ and 
\begin{align}\label{eq:JP_resampling}
    X_{k+1}= \Psi\!\left( 
     X_k(1-\alpha (t_{k+1}-t_k)) + \sigma \sqrt{(X_k+\tau)(\tau-X_k)}(W_{t_{k+1}}-W_{t_k})
    \right)
\end{align}
for $k=0,\ldots,N-1$,
where $\Psi$ denotes the acceptance-rejection operator, defined for a random variable $Z$ by
\begin{align*}
\Psi(Z) =
    \begin{cases}
    Z, & |Z|\le \tau,\\
    \Psi(Z'), & |Z|>\tau,
    \end{cases}
\end{align*}
with $Z'$ an independent copy of $Z$. 
This correction is only required due to the numerical grid $t_{k+1}-t_k>0$ as the continuous Jacobi process itself remains in $[-\tau,\tau]$ in the sense of \eqref{eq:JP_in_tau}.
\begin{remark}
In contrast to a projection approach as, e.g., 
\begin{align*}
    \Pi(z)= \max \{ -\tau, \min \{ z, \tau \} \}, \qquad z \in \mathbb{R},
\end{align*}
the resampling scheme does not create point masses the boundaries.
\end{remark}

\begin{remark}
    Another benefit of the Jacobi process is that its distribution $\nu_t$ can be characterized through the evolution of its density $f_t^\epsilon(a)$, which follows a Fokker-Planck evolution (see e.g., \cite{Risken1996}) which is the solution to the PDE:
\begin{align}\label{eq:FP}\tag{FP}
    \partial_t f^{\epsilon}_{t}(a) 
    = \alpha \partial_a (a f^{\epsilon}_{t}(a)) + \frac{1}{2} \sigma^2 \partial_{aa}^2 \Bigl( (\tau^2 - a^2) f^{\epsilon}_{t}(a) \Bigr), \quad f_0^\epsilon(a)=1\chi_{\{0\}}(a).
\end{align}
Hence, $\bar{v}$ can be explicitly derived as
\begin{align}\label{eq:JP_barv_precise}
    \bar{v}(\bar{m},t) = \int_{-\tau}^{\tau} v_a(\bar{m},t)   f^{\epsilon}_{t}(a)\ da.
\end{align}
However, as we outline in Section \ref{subsec:err_sampl} this remains a theoretical property, as our implementation needs to account not only for the acceptance-rejection sampling of the process itself but also for the piecewise constant error structure of $\epsilon(t)$ as of Assumption (C).
\end{remark}

\section{Numerical discretization}\label{sec:numerics}
For all subsequent evaluations, we assume a time mesh given by $t^n = n\Delta t$ for $n = 0, \dots, N_T$, with $N_T := \left\lfloor T / \Delta t \right\rfloor$. 
Moreover, we make the simplification that the time-dependent error  \eqref{eq:err_very_general} evolves on a finer (generally unknown) $\delta_R$-grid than our numerical observations. 
That is, $\delta_R \leq \Delta t$. This allows us to neglect additional correlations between consecutive observations introduced by the piecewise constant nature of the error. 
For further rationale, we refer to Section~3.1 of \cite{Boehme_2025}, but note that this is equivalent to assuming that decreasing $\Delta t$ improves not only the approximation of the conservation law, but also the representation of the error term until $\Delta t < \delta_R $ is obtained. However, this is purely of a technical nature.

\subsection{Noise sampling and evaluation}\label{subsec:err_sampl} 
Since we require only $N_T$ evaluations of $\epsilon(t)$ (as in Definition \ref{def:markov_error}), we can directly sample from the generating Markov chain by setting the time increments equal:
\begin{align}\label{eq:epsilon_sample}
    \epsilon^n := \epsilon(t^n) = X_{k_n}, \quad \text{where } k_n = \big\lfloor n\Delta t / \delta_R \big\rfloor.
\end{align}
For simplicity, we take $\delta_R = \Delta t$ in our numerical schemes, so that we can ease the notation by setting $k_n = n$.
If we require a finer noise grid as, for example, in convergence analysis of $\Delta t$, we can always generate the error process with $\delta_R < \Delta t$ and then evaluate it at the coarse grid points.\\
In the white-noise case, the sampling of $\epsilon(t)$ is straightforward by drawing at least $N_T$ i.i.d.\ realizations according to \eqref{eq:WN_sampling}, thus constructing an admissible (though time-independent) process as in Definition~\ref{def:markov_error}. 
In addition, $\bar{v}$ is given by \eqref{eq:bar_v_WN}.\\

However, using the Jacobi process as a generator requires a more careful implementation. We have already established that the increments of the admissible Markovian error process are drawn using the acceptance‑rejection approach from \eqref{eq:JP_resampling}.
Hence, we are left with just identifying 
\begin{align*}
    t_{n+1}-t_n = \Delta t \quad \text{and}\quad W_{t_{n+1}}-W_{t_{n}} \sim \mathcal{N}(0, \Delta t).
\end{align*}
\noindent
To compute $\bar{v}(\bar{m},t)$ as in \eqref{eq:eyp_density}, we need access to the distribution $\nu_t$ of $\epsilon(t)$. Although the continuous process satisfies \eqref{eq:FP}, the construction of $\epsilon(t)$ requires a separate approximation of its density at every time step.
To do so, we fix a simulation grid of the probability space with cell centers $\mathcal{A}=\{a_j\}_{j=1}^M \subset [-\tau, \tau]$ and integration weights $\{w_j\}_{j=1}^M$, corresponding to the represented cell-width.
We denote by $\hat{f}_n$ the numerical density of $\epsilon(t^n)$ and set the initial density $\hat{f}_0$ to approximate the cell centered around zero with mass one.
From there, we evolve $\hat{f}_n$ on the same time grid as before using forward equations with respect to the update scheme \eqref{eq:JP_resampling}.
More precisely, given any realization $\epsilon(t^n)=a_j$, the Markov property together with the Euler-Maruyama implementation allows us to assume a conditionally Gaussian distribution for the proposal step of \eqref{eq:JP_resampling}.
Thus, for each possible grid-realization $a_j \in \mathcal{A}$, we compute 
\begin{align*}
\mu_j &= a_j(1-\alpha\Delta t), \qquad
\sigma_j^2 = \sigma^2(\tau^2-a_j^2)\Delta t,
\end{align*}
allowing us to calculate the density after the acceptance-rejection update as
\begin{align*}
\hat{f}_{n+1}(a_j) = \sum_{i=1}^{M} \hat{f}_n(a_i) \, \frac{\phi(a_j; \mu_i, \sigma_i^2)}{\sum_{k=1}^{M} \phi(a_k; \mu_i, \sigma_i^2) w_k} \, w_i, \qquad a_j \in \mathcal{A},
\end{align*}
where $\phi(\cdot; \mu_i, \sigma_i^2)$ denotes a standard Gaussian Kernel. 
Note, that the denominator is necessary to redistribute the mass of the unbounded kernel to $[-\tau,\tau]$, thus respecting the acceptance-rejection approach as of \eqref{eq:JP_resampling}.
This then allows us to calculate $\bar{v}(\bar{m},t)$ via quadrature of \eqref{eq:JP_barv_precise}, such that:
\begin{align}\label{eq:JP_barv_numeric}
\bar{v}(\rho, t^n) \approx \sum_{j=1}^{M} \max\!\bigl(0, v(\rho) + a_j\bigr) \hat{f}_n(a_j) w_j.
\end{align}
Finally, we apply the Godunov scheme for \eqref{eq:EsNV}, which we outline after two short remarks.
\begin{remark}
\leavevmode
    \begin{itemize}[leftmargin=1.2em, itemsep=1pt, topsep=1pt]
        \item Alternatively, one can employ Monte Carlo sampling combined with kernel density estimation which, however, introduces additional hard-to-control noise. 
        \item We use a Chebyshev grid $\mathcal{A}$, which clusters points near the boundaries $\pm\tau$, since $\hat{f}_n$ can exhibit rapid gradients there. Although a rigorous analysis of the approximation error is outside the scope of this work, we found that $M=601$ points provide density profiles that match those obtained from high-sample Monte Carlo comparisons, while keeping the runtime short through parallel computation, see Figure~\ref{fig:density_over_time} for an example.
    \end{itemize}
   \begin{figure}[htb]
    \centering
    \includegraphics[width=0.5\linewidth]{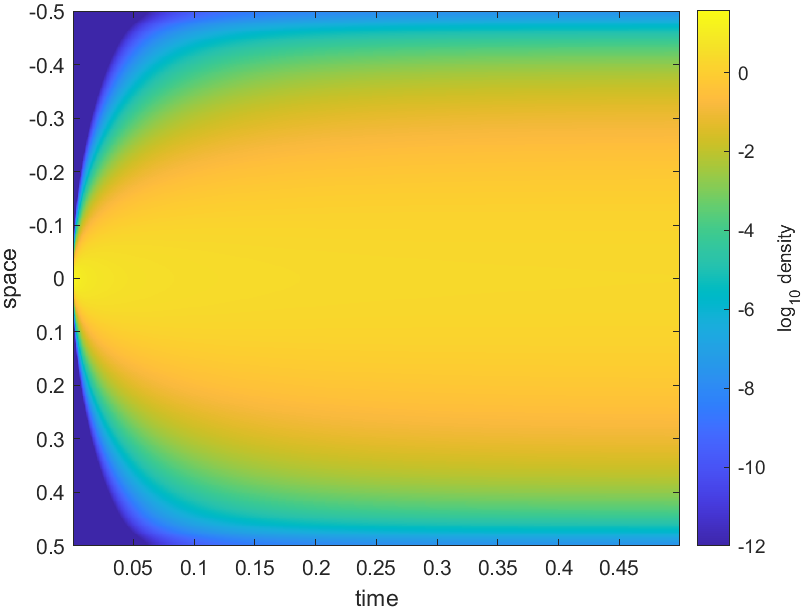}
    \caption{Evolution of $\hat{f}_n$ ($\alpha=4, \sigma=1, \tau=0.5$) on a 601-node Chebyshev grid}%
\label{fig:density_over_time}
\end{figure} 
\end{remark}

\subsection{Numerical scheme for the sNV model}\label{subsec:snv_scheme} 
For an in-depth description of the numerical scheme alongside convergence proofs, we refer to \cite{Boehme_2025}, but repeat its central points, while slightly expanding the notation with respect to $\epsilon(t)$ sampled as described before.
As typical for Godunov type schemes, we assume an equidistant spatial grid with cell centers $x_j$, cell interfaces $x_{j-1/2}$ and cell length $\Delta x= x_{j+1/2}-x_{j-1/2} \; \forall j \in \mathbb{Z}$. 
Let, as usual, $\rho_j^n:=\rho(t^n,x_j)$ and define the piecewise constant function 
\begin{align}\label{eq:mesh}
	\rho^{\Delta x}(t,x)=\rho_j^n \text{ for } (t,x)\in[t^n,t^{n+1})\times[x_{j-1/2},x_{j+1/2}).
\end{align} 
Then, the initial deterministic density $\rho_0$ is discretized by the cell averages with respect to (\ref{eq:mesh}).
In each time step, the Riemann problems arising at the discontinuities between the numerical densities $\rho^n_j, \; j \in \mathbb{Z}$ are then solved exactly until the first shocks collide. Thus, the update of the cell densities is calculated as
\begin{align*}
    \rho_j^{n+1}=\rho_j^{n}+\frac{\Delta t}{\Delta x}\left(F^n_{j+1/2}(\rho_j^n)-F^n_{j-1/2}(\rho_{j-1}^n)\right),
\end{align*}
where the numerical flux $F^n_{j+1/2}$ is based on the solution to the Riemann problems at the cell interfaces and the actual flux.
Defining
\begin{align*}
    v_{\epsilon}^n(\rho_j^n)&:=v_\epsilon(\rho_j^n,t^n)=\max\{0,v(\rho_j^n)+\epsilon^n\},
\end{align*}
where $\epsilon^n$ is sampled as in \eqref{eq:epsilon_sample} according to its generating Markov chain, which may be white noise \eqref{eq:WN_sampling}, a Jacobi process \eqref{eq:JP_resampling} or any other admissible generator. We employ the stochastic upwind flux
\begin{align*}
	F_{j+1/2}^n(\rho_j^n)=\rho_j^n V_{\epsilon,j}^n, \;\; \text{with} \;\; V_{\epsilon,j}^n &= \sum_{k=0}^{N_\eta-1} \gamma_k v_\epsilon^n(\rho^n_{j+k+1}), \quad N_\eta=\left\lfloor \eta/\Delta x\right\rfloor,
\end{align*}
given the kernel evaluation
\begin{align*}
	\gamma_k &=\int_{k \Delta x}^{(k+1) \Delta x} W_\eta(x) \,dx, \quad k=0,\dots,N_\eta-1. 
\end{align*}
Thus, our stochastic time step update reads 
\begin{align}\label{eq:SCHEM_sNV}
	\rho_j^{n+1}=\rho_j^{n}-\lambda \left(\rho_j^n V_{\epsilon,j}^n-\rho_{j-1}^n V_{\epsilon,j-1}^n\right), \quad \lambda:=\frac{\Delta t}{\Delta x}.
\end{align}
As described in \cite[Rem. 4.1]{Boehme_2025}, the stochastic CFL condition
for \eqref{eq:sNV}
\begin{align}\label{eq:sCFL} 
    \lambda \leq \frac{1}{\gamma_0 \lVert v' \rVert \rho^{\max}+\lVert v_\epsilon \rVert(\omega)} := C_{\text{CFL}}^{\max}, 
\end{align}
can be deterministically bounded with the help of the error terms bound $\tau$ as
\begin{align}\label{eq:sCFL_det} 
    C_{\text{CFL}}^{\text{det}}:=\frac{1}{\gamma_0 \lVert v' \rVert \rho^{\max}+v^{\max} + \tau} 
    \leq C_{\text{CFL}}^{\max}.
\end{align}

\subsection{Numerical scheme for the EsNV model}\label{subsec:esnv_scheme}
The numerical scheme for our expectation model \eqref{eq:EsNV} follows directly from the scheme for the sNV model, by replacing the stochastic velocity $v_\epsilon^n$ with the expected deterministic velocity $\bar{v}^n(\cdot) := \bar{v}(\cdot, t^n)$, calculated by
\eqref{eq:bar_v_WN} for white noise or \eqref{eq:JP_barv_numeric} for the Jacobi process.
The corresponding numerical flux is
\begin{align*}
	\bar{F}_{j+1/2}^n(\bar{m}_j^n)=\bar{m}^n \bar{V}_{j}^n, \;\; \text{with} \;\; \bar{V}_{j}^n &= \sum_{k=0}^{N_\eta-1} \gamma_k \bar{v}^n(\bar{m}^n_{j+k+1}), \quad N_\eta=\left\lfloor \eta/\Delta x\right\rfloor,
\end{align*}
where $\bar{m}_j^n$ approximates $\bar{m}(t^n, x_j)$ with $\bar{m}_j^0=\rho_0(x_j)$ discretized as before.
The update is then
\begin{align*}
\bar{m}_j^{n+1} = \bar{m}_j^n - \lambda \left( \bar{m}_j^n \bar{V}_j^n - \bar{m}_{j-1}^n \bar{V}_{j-1}^n \right), \quad \lambda=\frac{\Delta t}{\Delta x}.
\end{align*}
 The CFL condition coincides with the deterministic bound in \eqref{eq:sCFL}, i.e., $\lambda \leq C_{\text{CFL}}^{\text{det}}$.

\section{Numerical results}\label{sec:num_results}
We now focus on the numerical investigation of the two primary extensions of the initial model of \cite{Boehme_2025}: the inclusion of autocorrelated Markovian noise and the derived simplified mean-value PDE \eqref{eq:EsNV}.
The aim of this part is twofold. 
First, in Section \ref{subsec:markovian_results}, we demonstrate that autocorrelated noise structures induce significantly stronger perturbations in the traffic density compared to the white-noise approach,
underscoring the relevance of the now-available Markovian setting. 
Second, motivated by these strong perturbations and the need for a mean-value estimation, Section \ref{subsec:esnv_results} evaluates the accuracy of \eqref{eq:EsNV} as a computational proxy for the latter. Its ability to capture the expectation of the stochastic solutions is assessed through extensive comparisons with Monte Carlo averages on characteristic space.

\subsection{Impact of autocorrelated Markovian noise}\label{subsec:markovian_results} 
We proceed to analyze the impact of autocorrelated Markovian noise exemplarily based on the Jacobi process, sampled according to Section \ref{subsec:err_sampl} with the stochastic Godunov scheme \eqref{eq:SCHEM_sNV} and compare it to the white noise approach of \eqref{eq:WN_sampling}. 
For this purpose, we choose the initial density $\rho_0^{\text{low}}$ and the kernel $W_\eta^{\text{conc.}}$ from Example \ref{ex:standart_ex}, with look-ahead distance $\eta=0.2$. Further, we set $\tau=v^{\max}/2=0.5$ for both error types. Additionally, for the Jacobi process, we fix the mean reversion- and volatility parameter to $\alpha=4$ and $\sigma=1$, respectively. To distinguish the different noise types involved, we add the suffix \texttt{\_JP} and \texttt{\_WN}, for the Jacobi-type and white noise, respectively.\\
\begin{figure}[htb]
    \centering
    \begin{subfigure}{0.5\textwidth}
        \centering
        \includegraphics[width=1.0\linewidth]{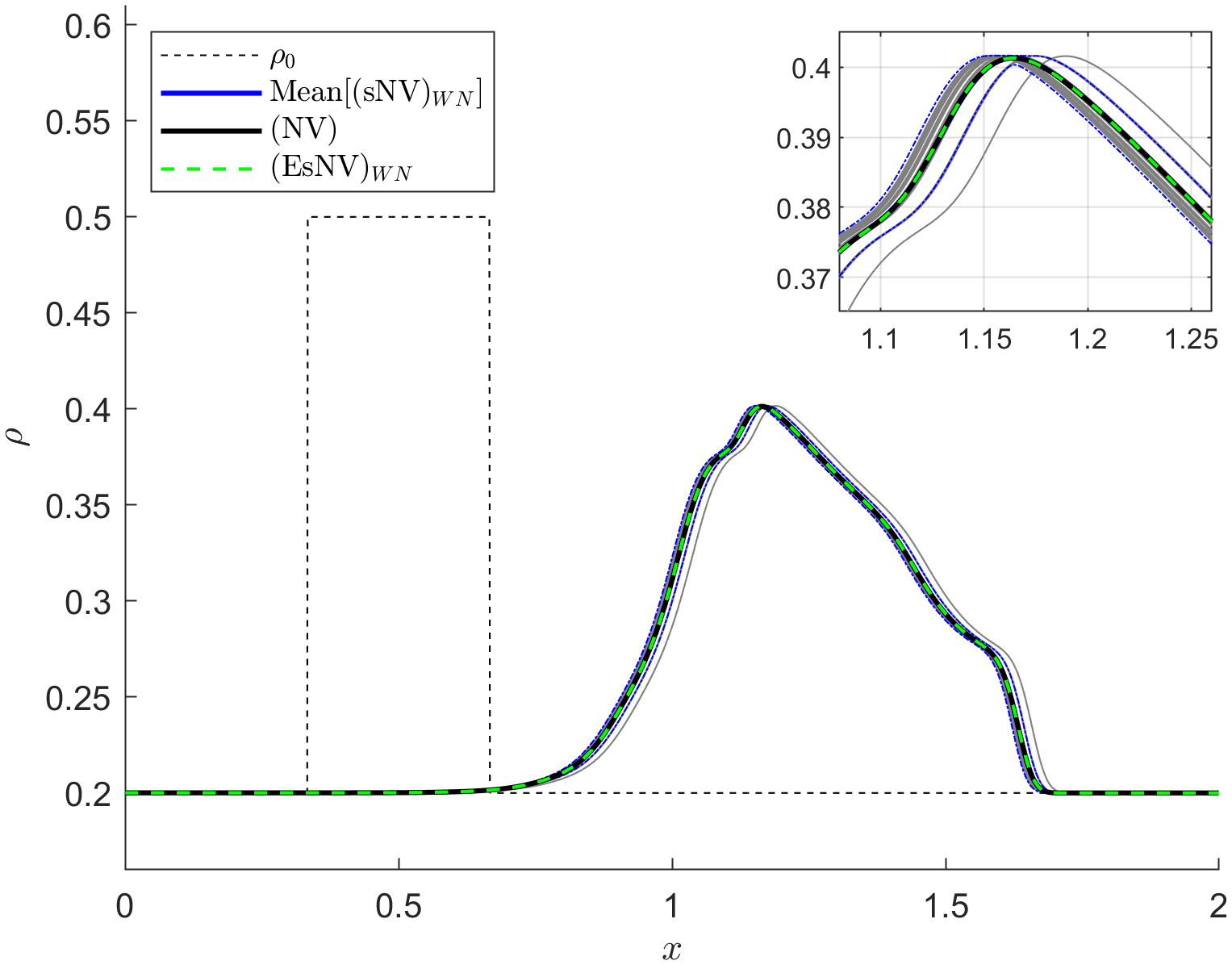}
      \end{subfigure}%
      \begin{subfigure}{0.5\textwidth}
        \centering
        \includegraphics[width=1.0\linewidth]{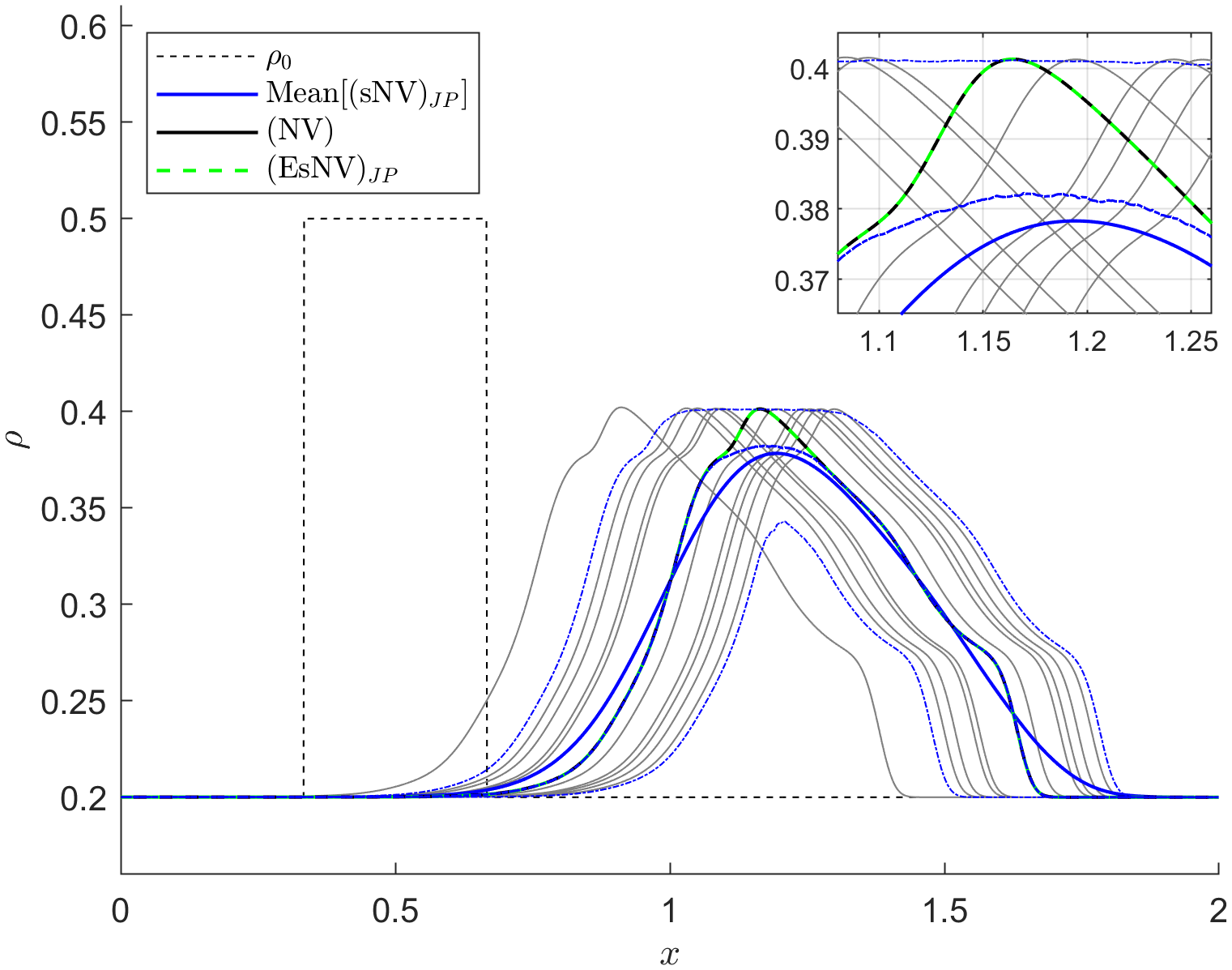}
      \end{subfigure}
    \caption{Realizations (gray), Monte Carlo average and pointwise $\{5, 50, 95\}\%$-quantiles (blue, $M=2\cdot 10^3$) of \eqref{eq:sNV}, including mean-proxy \eqref{eq:EsNV} (green) at $T=1$. Left: white noise. Right: Jacobi-type noise. $\Delta x=10^{-3}$, $\Delta t$ acc. to \eqref{eq:sCFL_det} }
\label{fig:noise_comparison}
\end{figure}

Remarking that the parameters $\alpha$ and $\sigma$ are moderate - with more extreme cases possible - we make the following observations based on Figure \ref{fig:noise_comparison}:
\begin{itemize}[noitemsep]
    \item The Jacobi-type noise produces significantly stronger fluctuations compared to the white noise approach, where each realization represents a valid solution to a differently perturbed \eqref{eq:sNV} system.
    \item In particular, white noise induces only minimal effective perturbation. This is explained by the intrinsic averaging of noise terms within each observation of the process, whereas the temporal correlations introduced by the Markovian structure prevent this intrinsic cancellation.
    \item Consistent with the analysis in \cite{Boehme_2025}, the low initial density causes  \eqref{eq:EsNV} to coincide with the deterministic solution \eqref{eq:NV}. This results from a symmetric error influence and the non-linear max-operator remaining inactive. A counter-example necessitating the calculation of $\bar{v}$ via \eqref{eq:JP_barv_numeric} follows in Section \ref{subsec:esnv_results}.
\end{itemize}
Crucially, the introduced high variability of solutions renders the following pressing:
\begin{itemize}[noitemsep]
    \item Although the Monte Carlo average exhibits numerical diffusion in both cases (a discrepancy that might be negligible for white noise) its prominence for the Jacobi-type noise underlines the need for a proxy describing the expectations, which still obeys a conservation law.
    While the Monte Carlo average may be physically interpreted as an average road utilization over multiple observations, this averaged profile does not represent a valid solution to the dynamics \eqref{eq:sNV} for any single noise realization.
\end{itemize}
The latter finding is exactly the motivation for our derivation of \eqref{eq:EsNV} in Section \ref{subsec:mean_velo}. 
By incorporating the expected stochastic influence directly into the flux function, \eqref{eq:EsNV} remains a valid conservation law, propagating a single expected density at each time step $\Delta t$. However, how this coincides with the analytically unknown expectation $\mathbb{E}[\rho]$ is what we will study now using Monte Carlo averages to approximate the latter.

\subsection{Validation of (EsNV) and benefit of nonlocality}\label{subsec:esnv_results}
For the following, we utilize the same parameters as before, but employ $\rho_0^{\text{high}}$ from Example \ref{ex:standart_ex}. We choose this setting because it presents a more challenging scenario compared to $\rho_0^{\text{low}}$. In particular \eqref{eq:EsNV} no longer coincides with \eqref{eq:NV}. Instead, the calculation now requires the stochastic density to capture the portion of the distribution affected by the active max-operator, which  necessitates the calculation as of Section~\ref{subsec:err_sampl}.
Furthermore, given the particular need for a suitable approximation in the context of the autocorrelated noise, we will focus our analysis on the latter.

\begin{figure}[htb]
    \centering
    \begin{subfigure}{.5\textwidth}
        \centering
        \includegraphics[width=1.0\linewidth]{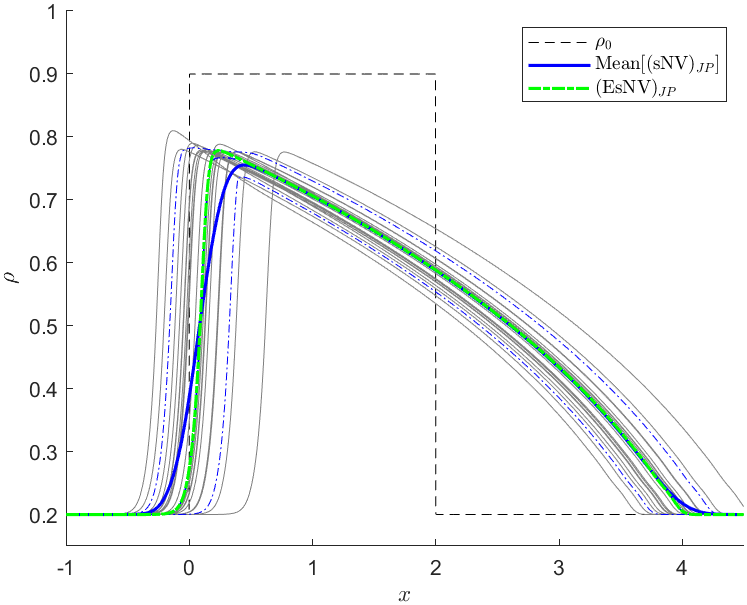}
      \end{subfigure}%
      \begin{subfigure}{.5\textwidth}
        \centering
        \includegraphics[width=1.0\linewidth]{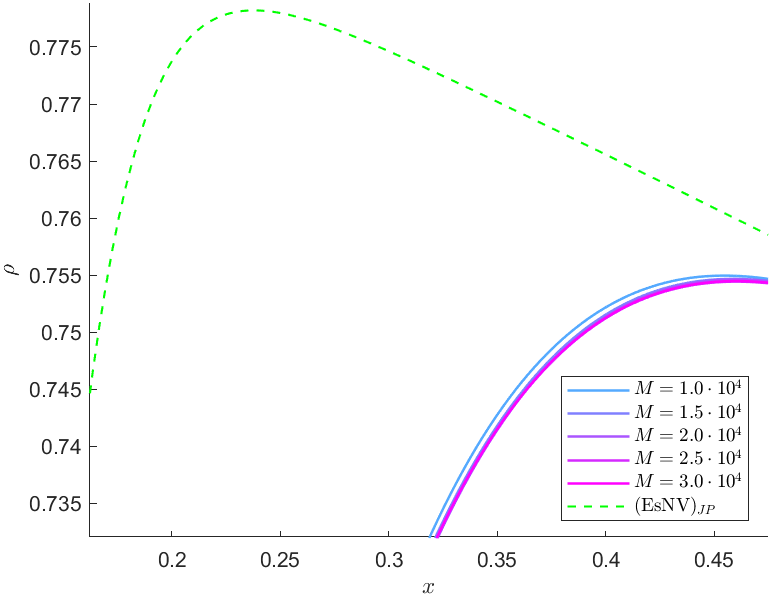}
      \end{subfigure}
      \caption{Left: Realizations (gray), Monte Carlo average and pointwise $\{5, 50, 95\}\%$-quantiles (blue, $M=2\cdot 10^3$) of \eqref{eq:sNV}, including mean-proxy \eqref{eq:EsNV} (green) at $T=2$ for Jacobi-type noise. Right: Zoom in on multiple averages for increasing numbers of samples.
      $\Delta x=3\cdot10^{-3}$, $\Delta t$ acc. to \eqref{eq:sCFL_det}}
\label{fig:noise_comparison_high}
\end{figure}

Starting at the density level, we observe that in regions of smooth propagation, e.g., $x\in[1,3]$, the average coincides well with \eqref{eq:EsNV}. However, near rapid changes, e.g., around $x=0$, the average inevitably smooths out the shocks.
Crucially, increasing the sample size does not reduce this effect, as the average of random jumps results in a linear interpolation. Verifying this on the right of Figure \ref{fig:noise_comparison_high}, we conclude that the Monte Carlo average of the i.i.d densities $\rho^{\Delta x,(k)}$ may provide insight into expected road usage, but does not - or at least not fast enough - converge to a valid solution of \eqref{eq:sNV} for a specific realization of the noise or to \eqref{eq:EsNV}. \\
However, \eqref{eq:EsNV} lies close to the pointwise median and is generally centered among the realizations. Additionally, as seen in Figure \ref{fig:chars_rho_high}, this relation is mirrored in characteristic space.
To validate these qualitative observations, we circumvent the smoothing of the density averaging by shifting the analysis to the characteristic space.
By averaging the realized characteristics given by \eqref{eq:chars_integral_eq} instead of the densities, we exploit mass conservation and make use of the trajectories themselves, which are more robust to shocks. In doing so, we define the average characteristic - or equivalently the average position of a particle - as:

\begin{definition}[Characteristic Monte Carlo Average]\label{def:MCM_char}
    For given starting values $t_0$ and $x_0$, define the average of realized characteristics as:
    \begin{align*}
         \bar{X}_\rho^M[t_0,x_0](t) := \frac{1}{M} \sum _{k=1}^M X^{(k)}_{\rho,\epsilon}[t_0,x_0](t) \approx\mathbb{E}\bigl[X_{\rho}[t_0,x_0](t)\bigr],
    \end{align*}
    where the superscript $(k)$ denotes the i.i.d. copies of the solutions to the stochastic integral equation \eqref{eq:chars_integral_eq}.
\end{definition}
\noindent
As depicted in Figure~\ref{fig:strong_dev_char}, the linear Monte Carlo averaging of the characteristics around the shock-front at $x=0$ preserves the forward propagation of particles without introducing smoothing. Hence, the average given by Definition~\ref{def:MCM_char} maintains the conservation dynamics, 
which is effectively captured by \eqref{eq:EsNV} at the characteristic level. By the duality between characteristic and density space, it follows that the associated density is indeed the most likely realization visible in Figure~\ref{fig:chars_rho_high}. 

\begin{figure}[htb]
    \centering
    \includegraphics[width=1\textwidth]{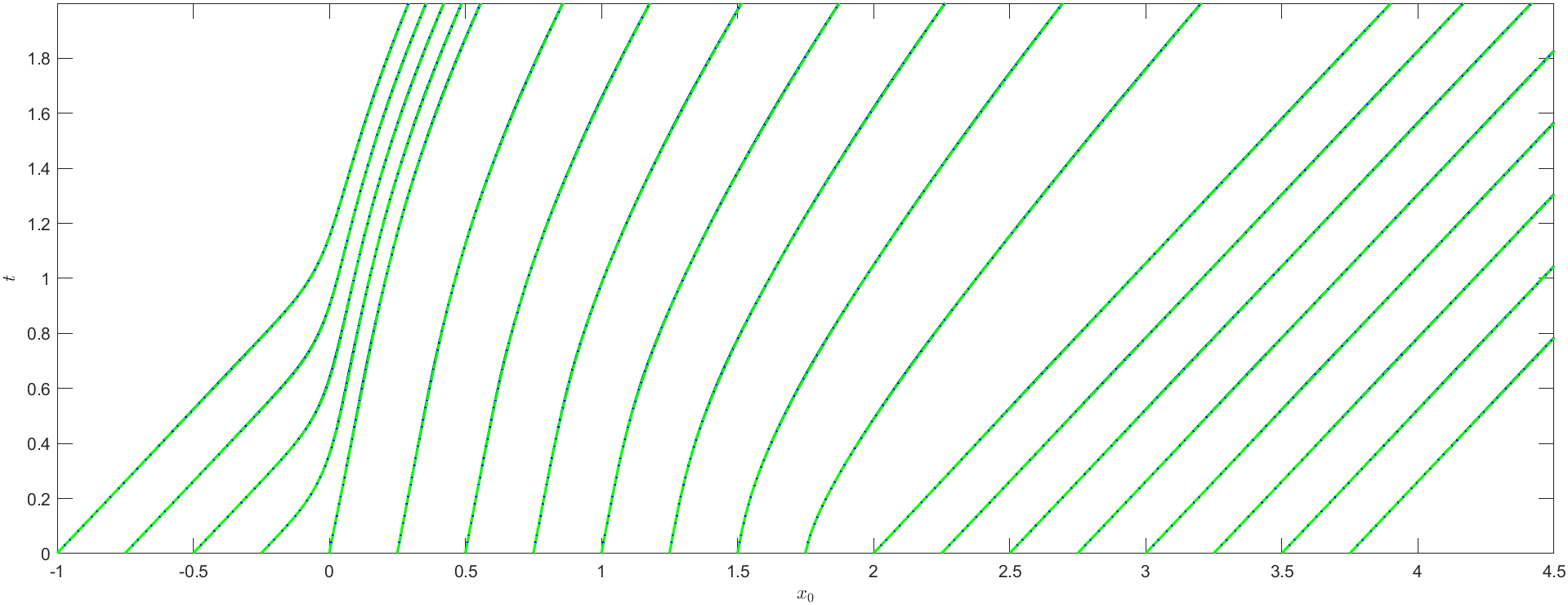}
    \caption{Characteristics of Figure \ref{fig:noise_comparison_high}. The Characteristic Monte Carlo average ($M=2\cdot10^3$) in blue, with \eqref{eq:EsNV} mostly overlaying the average in dashed green lines. $\Delta x=3\cdot10^{-3}$, $\Delta t$ acc. to \eqref{eq:sCFL_det}.}
\label{fig:strong_dev_char}
\end{figure}
\noindent
We substantiate the above by providing numerical evidence that the average as of Definition~\ref{def:MCM_char} coincides with \eqref{eq:EsNV}. If $X_{\bar{m}}$ denotes the characteristics of \eqref{eq:EsNV}, we define the Monte Carlo $\ell^1$-bias in the characteristic space as:
\begin{align*}
    \ell^1[t_0,x_0](M) := \left| \bar{X}_\rho^M - X_{\bar{m}} \right|(t_0,x_0).
\end{align*}
Then, for a fixed set $(t_0,x_0)$, we provide the results of Monte Carlo computations constituting numerical evidence for the following hypotheses:
\begin{enumerate}[itemsep=1em, label=\Roman*.]
    \item As $M \to \infty$, the reduction of the Monte Carlo error leads to a decrease in $\ell^1$.
    \item For a fixed $M$, the approximation error induced by \eqref{eq:exp_approx} is reduced by using finer time steps, i.e., for $\Delta t \to 0$,  $\ell^1$ is reduced. 
    \item Consequently,
     $ \displaystyle  \lim_{M \to \infty} \lim_{\Delta t \to 0} \ell^1 = 0.$
\end{enumerate}
The numerical results are given in Figure \ref{fig:JP_noise_err}, which illustrates the  rates with respect to both parameters.
\begin{figure}[htb]
    \centering
     \includegraphics[width=0.9\textwidth]{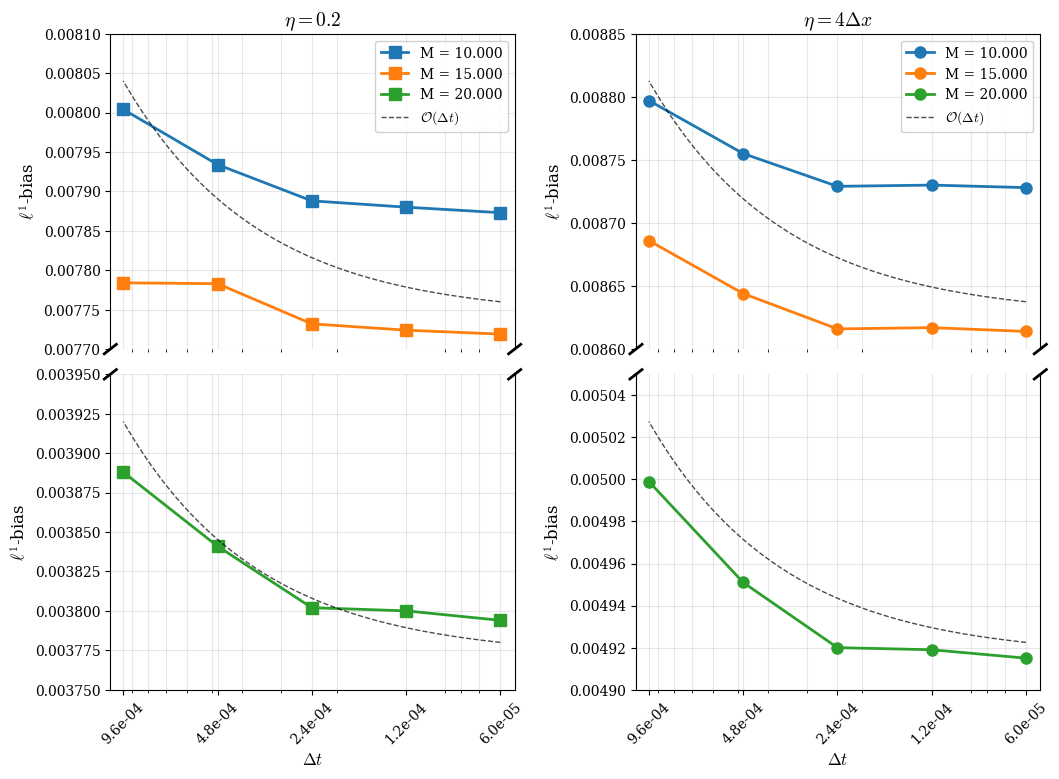}
  \caption{$\ell^1$-bias for decreasing $\Delta t$ and increasing $M$. 
    Setting $t_0=0$ and $x_0$ as in Fig.~\ref{fig:strong_dev_char}, with $\Delta x=3 \cdot 10^{-3}$. 
    The look-ahead distance is set to medium (left) or very short (right). }
    \label{fig:JP_noise_err}
\end{figure}
Additionally, the bias levels are observed to worsen as we approach the local limit (see Figure \ref{fig:JP_noise_err}, right), when deviating from our standard nonlocal setting (left). 
This is attributed to the induced smoother driving behavior and reduced shock propagation of nonlocal models and constitutes one of the main benefits of implementing stochastic models to nonlocal models compared to existing local alternatives.\\
As such, by choosing sufficiently small time steps $\Delta t$, the propagation of the expected density reflects the average particle position, without the necessity of modeling additional correlation structures of the stochastic densities, effectively decoupling the expectation from the full probability measure. 
Thus, through the relation between particle trajectories and the density profile, \eqref{eq:EsNV} does yield an expected density profile that preserves shock structures, while the Monte Carlo average of the density merely represents the average road occupancy (e.g., observations over multiple days) and consequently lacks the physical sharpness of a valid single realization.
In conclusion, the $\ell^1$-bias justify \eqref{eq:EsNV} as an effective expectation-model.

\section{Conclusion and outlook} \label{sec:conc_outlook}
The presented work strengthens the initial framework of the sNV model by consolidating its theoretical foundations and significantly expanding its modeling capabilities, notably through the derivation and validation of a mean-value proxy.
The key contributions of this paper are threefold, building on one another. 
First, we set the theoretical foundation by proving the measurability of the random weak entropy solutions, thereby ensuring the existence of a well-defined expectation. 
Then, we took a modeling perspective and expanded the framework to allow for Markovian noise.
Specifically, we introduced a suitable noise process of Jacobi type that bridges the three-dimensional gap between required regularity, modeling flexibility, and physical interpretation.
Inspired by the stronger perturbations of this approach and the knowledge of the existence of an expectation, we introduced a local solution operator to capture the effect of the noise, leading us to a deterministic mean-value hyperbolic PDE. 
Lastly, we highlighted the smoothing of standard Monte Carlo simulations when faced with low-regularity densities. To address this, we presented an alternative perspective that uses a characteristic Monte Carlo average, which allowed us to validate the use of the proxy model.\\
For future research, we plan to benchmark the modeling capabilities of the sNV model against real-world data. 
In this context, the calibration of the stochastic parameters, the application of the expectation proxy, and the influence of the non-local parameter are of particular interest. Especially, exploring how varying the non-local look-ahead range impacts the formation and propagation of traffic waves under uncertainty will provide deeper insights into the role of stochastic driver anticipation.

\bibliographystyle{siam}
\bibliography{arxiv-bibliography}

\begin{thebibliography}{10}

\bibitem{Abdelmalik2016}
{\sc M.~R.~A. Abdelmalik and E.~H. van Brummelen}, {\em Moment closure approximations of the boltzmann equation based on $\varphi$-divergences}, Journal of Statistical Physics, 164 (2016), pp.~77--104.

\bibitem{JP_general}
{\sc D.~Ackerer, D.~Filipović, and S.~Pulido}, {\em The jacobi stochastic volatility model}, Finance and Stochastics, 22 (2018), p.~667–700.

\bibitem{Barth_2016}
{\sc A.~Barth and F.~G. Fuchs}, {\em Uncertainty quantification for hyperbolic conservation laws with flux coefficients given by spatiotemporal random fields}, SIAM Journal on Scientific Computing, 38 (2016), p.~A2209–A2231.

\bibitem{BlandinGoatin2016}
{\sc S.~Blandin and P.~Goatin}, {\em Well-posedness of a conservation law with non-local flux arising in traffic flow modeling}, Numerische Mathematik, 132 (2016), pp.~217--241.

\bibitem{Boehme_2025}
{\sc {Böhme, T.}, {Göttlich, S.}, and {Neuenkirch, A.}}, {\em A nonlocal traffic flow model with stochastic velocity}, ESAIM: M2AN, 59 (2025), pp.~487--518.

\bibitem{Tosin2021_conference}
{\sc F.~A. Chiarello}, {\em An overview of non-local traffic flow models}, in Mathematical Descriptions of Traffic Flow: Micro, Macro and Kinetic Models, G.~Puppo and A.~Tosin, eds., Cham, 2021, Springer International Publishing, pp.~79--91.

\bibitem{Chiarello2018}
{\sc F.~A. Chiarello and P.~Goatin}, {\em {Global entropy weak solutions for general non-local traffic flow models with anisotropic kernel}}, {ESAIM: Mathematical Modelling and Numerical Analysis}, 52 (2018), pp.~163--180.

\bibitem{GoatinPuppo2024}
{\sc I.~Ciaramaglia, P.~Goatin, and G.~Puppo}, {\em Non-local traffic flow models with time delay: well-posedness and numerical approximation}, 2024.

\bibitem{ColomboGaravelloMercier2012}
{\sc R.~M. Colombo, M.~Garavello, and M.~L\'{e}cureux-Mercier}, {\em A class of nonlocal models for pedestrian traffic}, Mathematical Models and Methods in Applied Sciences, 22 (2012), p.~1150023.

\bibitem{colombo2020local}
{\sc G.~Crippa, E.~Marconi, L.~V. Spinolo, and M.~Colombo}, {\em Local limit of nonlocal traffic models: Convergence results and total variation blow-up}, Annales de l’Institut Henri Poincaré C, Analyse non linéaire, 38 (2021), pp.~1653--1666.

\bibitem{cui2025}
{\sc J.~Cui and R.~Y. He}, {\em Stoch-ident: New method and mathematical analysis for identifying spdes from data}, 2025.

\bibitem{JP_finance}
{\sc F.~Delbaen and H.~Shirakawa}, {\em An interest rate model with upper and lower bounds}, Asia-Pacific Financial Markets, 9 (2002), pp.~191--209.

\bibitem{eberlein}
{\sc A.~Eberle}, {\em Lecture notes on {M}arkov {P}rocesses}, 2015.

\bibitem{Friedrich2021_diss}
{\sc J.~Friedrich}, {\em Traffic flow models with nonlocal velocity}, PhD thesis, University of Mannheim, 11 2021.

\bibitem{Friedrich2018}
{\sc J.~Friedrich, O.~Kolb, and S.~Göttlich}, {\em A {Godunov} type scheme for a class of {LWR} traffic flow models with non-local flux}, Networks and Heterogeneous Media, 13 (2018), pp.~531--547.

\bibitem{garavellohanpiccoli2016book}
{\sc M.~Garavello, K.~Han, and B.~Piccoli}, {\em Models for vehicular traffic on networks}, vol.~9 of AIMS Series on Applied Mathematics, American Institute of Mathematical Sciences (AIMS), Springfield, MO, 2016.

\bibitem{GaravelloPiccoliBook}
{\sc M.~Garavello and B.~Piccoli}, {\em Traffic flow on networks}, vol.~1 of AIMS Series on Applied Mathematics, American Institute of Mathematical Sciences (AIMS), Springfield, MO, 2006.

\bibitem{Garnier2012}
{\sc J.~Garnier, G.~Papanicolaou, and T.-W. Yang}, {\em Anomalous shock displacement probabilities for a perturbed scalar conservation law}, Multiscale Modeling and Simulation, 11 (2013), pp.~1000--1032.

\bibitem{Huang2022}
{\sc K.~Huang and Q.~Du}, {\em Stability of a nonlocal traffic flow model for connected vehicles}, SIAM J. Appl. Math., 82 (2022), pp.~221--243.

\bibitem{Jabari2012}
{\sc S.~E. Jabari and H.~X. Liu}, {\em A stochastic model of traffic flow: Theoretical foundations}, Transportation Research Part B: Methodological, 46 (2012), pp.~156--174.

\bibitem{On_Wright_Fisher_basic_simulation}
{\sc P.~A. Jenkins and D.~Spanò}, {\em Exact simulation of the wright–fisher diffusion}, The Annals of Applied Probability, 27 (2017).

\bibitem{kallenberg2021foundations}
{\sc O.~Kallenberg}, {\em Foundations of Modern Probability}, vol.~99 of Probability Theory and Stochastic Modelling, Springer, Cham, 3~ed., 2021.

\bibitem{keimer2018nonlocal}
{\sc A.~Keimer, L.~Pflug, and M.~Spinola}, {\em Nonlocal scalar conservation laws on bounded domains and applications in traffic flow}, SIAM Journal on Mathematical Analysis, 50 (2018), pp.~6271--6306.

\bibitem{Kruzkov1970}
{\sc S.~N. Kru{\v{z}}kov}, {\em {First} {order} {quasilinear} {equations} {in} {serveral} {independent} {variables}}, Mathematics of the {USSR}-Sbornik, 10 (1970), pp.~217--243.

\bibitem{Levermore_1996}
{\sc C.~Levermore}, {\em Moment closure hierarchies for kinetic theories}, Journal of Statistical Physics, 83 (1996), pp.~1021--1065.

\bibitem{Li2012}
{\sc J.~Li, Q.-Y. Chen, H.~Wang, and D.~Ni}, {\em Analysis of {LWR} model with fundamental diagram subject to uncertainties}, Transportmetrica, 8 (2012), pp.~387--405.

\bibitem{Lions2013}
{\sc P.-L. Lions, B.~Perthame, and P.~E. Souganidis}, {\em Scalar conservation laws with rough (stochastic) fluxes}, Stochastic Partial Differential Equations: Analysis and Computations, 1 (2013), pp.~664--686.

\bibitem{Mishra2012}
{\sc S.~Mishra, N.~H. Risebro, C.~Schwab, and S.~Tokareva}, {\em Numerical solution of scalar conservation laws with random flux functions}, SIAM/ASA Journal on Uncertainty Quantification, 4 (2016), pp.~552--591.

\bibitem{pages}
{\sc G.~Pag{\`e}s}, {\em Numerical Probability}, Universitext, Springer, Cham, 2~ed., 2025.

\bibitem{Risebro2015}
{\sc N.~H. Risebro, C.~Schwab, and F.~Weber}, {\em Multilevel {Monte} {Carlo} front-tracking for random scalar conservation laws}, {BIT} Numerical Mathematics, 56 (2015), pp.~263--292.

\bibitem{Risken1996}
{\sc H.~Risken}, {\em The Fokker--Planck Equation: Methods of Solution and Applications}, vol.~18 of Springer Series in Synergetics, Springer Berlin, Heidelberg, 2~ed., 1996.

\bibitem{Rosini2013}
{\sc M.~D. Rosini}, {\em Macroscopic Models for Vehicular Flows and Crowd Dynamics: Theory and Applications}, Understanding Complex Systems, Springer, Heidelberg, 2013.

\bibitem{Wen2025}
{\sc J.~Wen, J.~Hu, C.~Wu, X.~Xiao, and N.~Lyu}, {\em A novel stochastic second-order macroscopic continuum traffic flow model for traffic instability}, Chaos, Solitons \& Fractals, 190 (2025), p.~115752.

\end{thebibliography}

\begin{appendices}

\section{Proof of Lemma \ref{lem:stab_result}}\label{subsec:proof_lemma_stab}

\begin{proof}
 Since $\gamma_1$ and $\gamma_2$ are arbitrary but fixed, the estimates from \cite{Boehme_2025}, which build on \cite[Theorem 2.4]{Friedrich2018}, apply directly :
    \begin{enumerate}
        \item[(a)] $0 \leq V_{{\gamma_1}}, V_{{\gamma_2}} \leq v^{max} + \tau$, since \newline
         $0 < W_\eta, W_0=1$ and $0 \leq v_{\gamma_i} \leq v^{max}+ \tau$ by construction.
        \item[(2)] $\left\lvert \partial_x V_{{\gamma_1}}(t,x)\right\rvert , \left\lvert \partial_x V_{{\gamma_2}}(t,x)\right\rvert < \infty$, with analogous bounds as in \cite[Theorem A.1.5]{Boehme_2025}.
        \item[(3)] $V_{{\gamma_1}}$ and $V_{{\gamma_2}}$ are Lipschitz continuous with respect to $x$.
    \end{enumerate}

    By construction and considerations (1)-(3), $V_{{\gamma_1}}$ and $V_{{\gamma_2}}$ satisfy the assumptions of Kru\v{z}kov
    \cite{Kruzkov1970}, allowing us 
    to apply the doubling of variables technique. Thus, as in \cite{Friedrich2018} or subsequently \cite[Eq. A.24]{Boehme_2025} we 
    obtain
    \begin{align}\label{eq:2.3.2}
        \qquad {\left\lVert \rho_1(t,\cdot)-\rho_2(t,\cdot) \right\rVert}_{L^1(\mathbb{R})} & \leq  {\left\lVert \rho_1^0-\rho_2^0 \right\rVert}_{L^1(\mathbb{R})} \nonumber
        \\ & \quad + \int_{0}^{T} \int_{\mathbb{R}}  |\partial_x \rho_1(t,x)| \left\lvert V_{\gamma_2}(t,x)-V_{\gamma_1}(t,x)\right\rvert   \,dx  \,dt   \nonumber \\
        &\quad  +\int_{0}^{T} \int_{\mathbb{R}}  |\rho_1(t,x)| \left\lvert \partial_x V_{\gamma_2}(t,x)- \partial_x V_{\gamma_1}(t,x)\right\rvert   \,dx  \,dt ,  
    \end{align}
    where $\partial_x \rho$ has to be understood in the sense of distributions.
     Invoking the Lipschitz-continuity of $v_\gamma$ in both variables, we derive
    \begin{align*}
        \left\lvert v_{\gamma_1}(\rho_1)-v_{\gamma_2}(\rho_2)\right\rvert \leq 
        \lVert v' \rVert_{\infty}  \left\lvert \rho_1 - \rho_2 \right\rvert 
        + \left\lvert \gamma_1 - \gamma_2 \right\rvert, 
    \end{align*}
     such that we can bound the velocity terms in \eqref{eq:2.3.2} by
    \begin{align}\label{eq:LIP}
        |V_{\gamma_2}(t,x)-V_{\gamma_1}(t,x)| &= \Bigl|\int_x^{x+\eta} W_\eta(y-x) \left (v_{\gamma_1}\bigl(\rho_1(t,y),t\bigr) - v_{\gamma_2}\bigl(\rho_2(t,y),t\bigr) \right) \,dy \,\Bigr| \nonumber \\
        &\leq W_\eta(0) \left( \lVert v' \rVert_{\infty}  \|\rho_1(t,\cdot)-\rho_2(t,\cdot)\|_{L^1(\mathbb{R})} +  \left\lvert \gamma_1(t) - \gamma_2(t) \right\rvert \right).
    \end{align}

    Similarly, differentiating $V_{\gamma_2}$ and $V_{\gamma_1}$, we obtain 
    \begin{align}\label{eq:LIP_D}
  &  \nonumber  |\partial_x V_{\gamma_2}(t,x)-\partial_x V_{\gamma_1}(t,x)| \\ & \qquad \leq
     \Big| \int_{x}^{x+\eta} W_\eta'(y-x) \left( v_{\gamma_1}\bigl(\rho_1(t,y),t\bigr) - v_{\gamma_2}\bigl(\rho_2(t,y),t\bigr) \right)  \,dy \nonumber \nonumber \\
            &\qquad\qquad  + W_\eta(\eta) \left( v_{\gamma_1}\bigl(\rho_1(t,x+\eta\bigr),t) - v_{\gamma_2}\bigl(\rho_2(t,x+\eta\bigr),t) \right) \nonumber \\
            &\qquad\qquad - W_\eta(0) \left( v_{\gamma_1}\bigl(\rho_1(t,x),t\bigr) - v_{\gamma_2}\bigl(\rho_2(t,x),t\bigr)\right) \Big|  \nonumber \\
            & \qquad \leq \lVert W_\eta' \rVert_{\infty} \left( \lVert v' \rVert_{\infty}  {\left\lVert \rho_1(t,\cdot)-\rho_2(t,\cdot) \right\rVert}_{L^1(\mathbb{R})} + \left\lvert \gamma_1(t) - \gamma_2(t) \right\rvert \right) \nonumber \\
            &\qquad\qquad + W_\eta(0) \left( \lVert v' \rVert_{\infty}  \left\lvert \rho_1(t,x+\eta)-\rho_2(t,x+\eta) \right\rvert + \left\lvert \gamma_1(t) - \gamma_2(t) \right\rvert \right) \nonumber \\
            &\qquad\qquad + W_\eta(0) \left( \lVert v' \rVert _{\infty} \left\lvert \rho_1(t,x)-\rho_2(t,x) \right\rvert + \left\lvert \gamma_1(t) - \gamma_2(t) \right\rvert \right) \nonumber \\
            & \qquad  = \lVert W_\eta' \rVert_{\infty} \lVert v' \rVert_{\infty}  {\left\lVert \rho_1(t,\cdot)-\rho_2(t,\cdot) \right\rVert}_{L^1(\mathbb{R})} \nonumber \\
            &\qquad\qquad+ W_\eta(0) \lVert v' \rVert_{\infty} \left( 
            \left\lvert \rho_1(t,x+\eta)-\rho_2(t,x+\eta) \right\rvert +
            \left\lvert \rho_1(t,x)-\rho_2(t,x) \right\rvert
            \right) \nonumber \\
            &\qquad\qquad+\left( \lVert W_\eta' \rVert_{\infty} + 2 W_\eta(0)\right) \left\lvert \gamma_1(t) - \gamma_2(t) \right\rvert.
        \end{align}

    Next, we plug our bounds \eqref{eq:LIP} and \eqref{eq:LIP_D} into (\ref{eq:2.3.2}) and obtain (dropping the $\infty$-subscript for notational simplicity) that
\begin{align*}
    &{\left\lVert \rho_1(t,\cdot)-\rho_2(t,\cdot) \right\rVert}_{L^1(\mathbb{R})} \\ 
    &\quad\quad\leq  {\left\lVert \rho_1^0-\rho_2^0 \right\rVert}_{L^1(\mathbb{R})} 
    &&+ W_\eta(0) \lVert v' \rVert \int_{0}^{T}  {\left\lVert \rho_1(t,\cdot)-\rho_2(t,\cdot) \right\rVert}_{L^1(\mathbb{R})} 
    \int_{\mathbb{R}} |\partial_x \rho_1(t,x)|  \,dx  \,dt \\
    &&&+  W_\eta(0) \int_0^T |\gamma_1(t)-\gamma_2(t)| \int_{\mathbb{R}} |\partial_x \rho_1(t,x)|  \,dx \,dt  \\
    &&&+\lVert W_\eta' \rVert \lVert v' \rVert \int_{0}^{T}  {\left\lVert \rho_1(t,\cdot)-\rho_2(t,\cdot) \right\rVert}_{L^1(\mathbb{R})} 
    \int_{\mathbb{R}} |\rho_1(t,x)|  \,dx  \,dt \\
    &&&+ W_\eta(0) \lVert v' \rVert \int_{0}^{T} \int_{\mathbb{R}} \Bigl( |\rho_1-\rho_2|(t,x+\eta)+|\rho_1-\rho_2|(t,x) \Bigr) |\rho_1(t,x)|  \,dx  \,dt\\
    &&&+ \left( \lVert W_\eta' \rVert + 2 W_\eta(0) \right) \int_0^T |\gamma_1(t)-\gamma_2(t)| \int_{\mathbb{R}} |\rho_1(t,x)|  \,dx  \,dt  \\
    &\quad\quad\leq  {\left\lVert \rho_1^0-\rho_2^0 \right\rVert}_{L^1(\mathbb{R})} 
    &&+ W_\eta(0) \lVert v' \rVert  \sup_{t \in [0,T]} \int_{\mathbb{R}} |\partial_x \rho_1(t,x)|  \,dx \int_{0}^{T} {\left\lVert \rho_1(t,\cdot)-\rho_2(t,\cdot) \right\rVert}_{L^1(\mathbb{R})}
      \,dt \\
    &&&+ W_\eta(0)  \sup_{t \in [0,T]} \int_{\mathbb{R}} |\partial_x \rho_1(t,x)|  \,dx \int_0^T |\gamma_1(t)-\gamma_2(t)| \,dt \\
    &&&+\lVert W_\eta' \rVert \lVert v' \rVert \sup_{t \in [0,T]} \int_{\mathbb{R}} |\rho_1(t,x)|  \,dx \int_{0}^{T}  {\left\lVert \rho_1(t,\cdot)-\rho_2(t,\cdot) \right\rVert}_{L^1(\mathbb{R})} 
      \,dt \\
    &&&+ W_\eta(0) \lVert v' \rVert \sup_{t \in [0,T]} \int_{\mathbb{R}} |\rho_1(t,x)|  \,dx \\
    &&&\qquad \cdot    \int_{0}^{T} \int_{\mathbb{R}} \Bigl( |\rho_1-\rho_2|(t,x+\eta)+|\rho_1-\rho_2|(t,x) \Bigr) \,dx  \,dt   \\
    &&&+ \left( \lVert W_\eta' \rVert + 2 W_\eta(0) \right) \sup_{t \in [0,T]} \int_{\mathbb{R}} |\rho_1(t,x)|  \,dx \int_0^T |\gamma_1(t)-\gamma_2(t)|   \,dt. 
\end{align*}

Now since 
\begin{align*}
    \sup_{t \in [0,T]} \int_{\mathbb{R}} |\partial_x \rho_1(t,x)|  \,dx &\leq \sup_{t \in [0,T]} {\left\lVert \rho_1(t,\cdot)\right\rVert}_{\text{BV}(\mathbb{R})} \\
    \sup_{t \in [0,T]} \int_{\mathbb{R}} |\rho_1(t,x)|  \,dx  &= \sup_{t \in [0,T]} {\left\lVert \rho_1(t,\cdot)\right\rVert}_{L^1(\mathbb{R})}, \\
    \int_{0}^{T} \int_{\mathbb{R}} \Bigl( |\rho_1-\rho_2|(t,x+\eta)+|\rho_1-\rho_2|(t,x) \Bigr) \,dx  \,dt &= 2\int_{0}^{T} {\left\lVert \rho_1(t,\cdot)-\rho_2(t,\cdot)\right\rVert}_{L^1(\mathbb{R})} \,dt,
\end{align*}

we obtain the bound
\begin{align*}
        {\left\lVert \rho_1(t,\cdot)-\rho_2(t,\cdot) \right\rVert}_{L^1(\mathbb{R})} 
        \leq  {\left\lVert \rho_1^0-\rho_2^0 \right\rVert}_{L^1(\mathbb{R})} 
        & + K^T_v \int_0^T {\left\lVert \rho_1(t,\cdot)-\rho_2(t,\cdot) \right\rVert}_{L^1(\mathbb{R})} \,dt \\
        & + K^T \int_0^T \left\lvert \gamma_1(t) - \gamma_2(t) \right\rvert \,dt
\end{align*}
with
\begin{align*}
        & K^T_v := \left\lVert v' \right\rVert 
        \left( W_\eta(0) 
            \Biggl( 2  
                \sup_{t \in [0,T]} {\left\lVert \rho_1(t,\cdot)\right\rVert}_{L^1(\mathbb{R})} 
                +\sup_{t \in [0,T]} {\left\lVert \rho_1(t,\cdot)\right\rVert}_{\text{BV}(\mathbb{R})} 
        \right) \\
        & \qquad\qquad\qquad + \lVert W_\eta' \rVert \sup_{t \in [0,T]} {\left\lVert \rho_1(t,\cdot)\right\rVert}_{L^1(\mathbb{R})} \Biggr),\nonumber \\ 
        &K^T :=W_\eta(0) 
      \left(  2 \sup_{t \in [0,T]} {\left\lVert \rho_1(t,\cdot)\right\rVert}_{L^1(\mathbb{R})} +
      \sup_{t \in [0,T]} {\left\lVert \rho_1(t,\cdot)\right\rVert}_{\text{BV}(\mathbb{R})}   
      \right) \\
      &\qquad\qquad\qquad+ \lVert W_\eta' \rVert \sup_{t \in [0,T]} {\left\lVert \rho_1(t,\cdot)\right\rVert}_{L^1(\mathbb{R})}  = \dfrac{K^T_v}{\left\lVert v' \right\rVert } .
\end{align*}
Since all terms term are non-decreasing, we conclude with Gr\"onwalls Lemma that
    \begin{align*}
        {\left\lVert \rho_1(t,\cdot)-\rho_2(t,\cdot) \right\rVert}_{L^1(\mathbb{R})} \leq  \exp(T K^T_v) \left(  {\left\lVert \rho_1^0-\rho_2^0 \right\rVert}_{L^1(\mathbb{R})}+ K^T \int_0^T \left\lvert \gamma_1(t) - \gamma_2(t) \right\rvert \,dt\right).
    \end{align*}
Using Theorem \ref{thm:ex_uniq_sNV_extended} we finally have that
$$ \sup_{t \in [0,T]} {\left\lVert \rho_1(t,\cdot)\right\rVert}_{L^1(\mathbb{R})}=\left\lVert \rho_1^{0}\right\rVert_{L^1(\mathbb{R})}$$
as well as
$$  \sup_{t \in [0,T]} {\left\lVert \rho_1(t,\cdot)\right\rVert}_{\text{BV}(\mathbb{R})}  \leq \exp(TC_1)TV(\rho_0;\mathbb{R}),$$
 which finishes the proof.
\end{proof}  
\end{appendices}

\end{document}